\newcommand*{\email}[1]{\texttt{#1}}
\definecolor{darkred}{RGB}{150,0,0}
\definecolor{darkgreen}{RGB}{0,150,0}
\definecolor{darkblue}{RGB}{0,0,150}
\newcommand{\BSC}{{\rm{BSC}}}
\newcommand{\BSCe}{{\rm{BSC}}_{\eps}}
\newcommand{\mathleft}{\@fleqntrue\@mathmargin0pt}
\newcommand{\mathcenter}{\@fleqnfalse}
\newcommand{\corr}[2]{{\rm{corr}}\left(\,{#1}\,;\,{#2}\,\right)}
\newcommand{\env}[3]{\mathcal{M}_{{#1}}\left({#2};{#3}\right)}
\newcommand{\prox}[3]{\mathrm{prox}_{{#1}}\left({#2};{#3}\right)}
\newcommand{\proxl}[2]{\mathrm{prox}_{\ell}\left({#1};{#2}\right)}
\newcommand{\elldd}{\ell^{''}}
\newcommand{\elld}{\ell^{'}}
\newcommand{\envdx}[3]{\mathcal{M}^{'}_{{#1},1}\left({#2};{#3}\right)}
\newcommand{\envdla}[3]{\mathcal{M}^{'}_{{#1},2}\left({#2};{#3}\right)}
\newcommand{\envddx}[3]{\mathcal{M}^{''}_{{#1},1}\left({#2};{#3}\right)}
\newcommand{\ourx}{\al G + \mu S Y}
\newcommand{\R}{\mathbb{R}}
\newcommand{\al}{\alpha}
\newcommand{\xh}{\widehat{\x}}
\newcommand{\yo}{\overline{y}}
\providecommand{\abs}[1]{\lvert#1\rvert}
\providecommand{\norm}[1]{\lVert#1\rVert}
\DeclarePairedDelimiterX{\inp}[2]{\langle}{\rangle}{#1, #2}
\newcommand{\ksi}{\xi}
\newcommand{\simiid}{\stackrel{\text{iid}}{\sim}}
\newcommand{\Pro}{\mathbb{P}}
\newcommand{\soft}[2]{{\Hc}\left({#1};{#2}\right)}
\theoremstyle{theorem}
\newtheorem{propo}{Proposition}[section]
\newtheorem{thm}{Theorem}[section]
\newtheorem{cor}{Corollary}[section]
\newtheorem{ass}{Assumption}
\theoremstyle{remark}
\newtheorem{remark}{Remark}
\theoremstyle{definition}
\newcommand{\eps}{\varepsilon}
\newcommand{\sign}{\mathrm{sign}}
\newcommand{\Exp}{\mathbb{E}}               
\newcommand{\E}{\mathbb{E}}                    
\newcommand{\la}{{\lambda}}                     
\newcommand{\sig}{\sigma}
\newcommand{\nn}{\notag}
\newcommand{\G}{\mathbf{G}}
\newcommand{\A}{\mathbf{A}}
\newcommand{\x}{\mathbf{x}}
\newcommand{\ub}{\mathbf{u}}
\newcommand{\w}{\mathbf{w}}
\newcommand{\g}{\mathbf{g}}
\newcommand{\vb}{\mathbf{v}}
\newcommand{\y}{\mathbf{y}}
\newcommand{\s}{\mathbf{s}}
\newcommand{\ab}{\mathbf{a}}
\newcommand{\h}{\mathbf{h}}
\newcommand{\betab}{\boldsymbol{\beta}}
\newcommand{\Fc}{\mathcal{F}}
\newcommand{\Mc}{\mathcal{M}}
\newcommand{\Nn}{\mathcal{N}}
\newcommand{\Hc}{\mathcal{H}}
\newcommand{\Ic}{\mathcal{I}}
\newcommand{\beq}{\begin{equation}}
\newcommand{\eeq}{\end{equation}}
\newcommand{\bea}{\begin{align}}
\newcommand{\eea}{\end{align}}
\newcommand{\vp}{\vspace{4pt}}
\def\bea#1\eea{\begin{align}#1\end{align}}
\title{Sharp Guarantees for Solving Random Equations\\ with One-Bit Information}
\author{
Hossein Taheri, Ramtin Pedarsani, and Christos Thrampoulidis
\thanks{
Electrical and Computer Engineering Department, University of California, Santa Barbara, Santa Barbara, CA 93106, USA. Emails: 
\email{\{hossein, ramtin, cthrampo\}@ucsb.edu}}
}
\begin{document}
\maketitle
\begin{abstract}
%
We study the performance of a wide class of convex optimization-based estimators for recovering a signal from corrupted one-bit measurements in high-dimensions. Our general result  predicts sharply the performance of such estimators in the linear asymptotic regime when the measurement vectors have entries IID Gaussian. This includes, as a special case, the previously studied least-squares estimator and various novel results for other popular estimators such as least-absolute deviations, hinge-loss and logistic-loss. Importantly, we exploit the fact that our analysis holds for generic convex loss functions to prove a bound on the best achievable performance across the entire class of estimators. 
Numerical simulations corroborate our theoretical findings and suggest they are accurate even for relatively small problem dimensions. 
\end{abstract}

\section{Introduction}

%
%
%

\subsection{Motivation}
Classical statistical signal-processing theory studies estimation problems in which the number of unknown parameters $n$ is small compared to the number of observations $m$. In contrast, modern inference problems are typically \emph{high-dimensional}, that is $n$ can be of the same order as $m$. Examples are abundant in a wide range of signal-processing applications such as medical imaging, wireless communications, recommendation systems and so on. Classical tools and theories are not applicable in these modern inference problems. As such, over the last two decades or so, the study of high-dimensional estimation problems has received significant attention. 
Despite the remarkable progress in many directions, several important questions remain to be explored. 

This paper studies the fundamental problem of recovering an unknown signal from (possibly corrupted) one-bit measurements in high-dimensions. We focus on a rather rich class of convex optimization-based estimators that includes, for example, least-squares (LS), least-absolute deviations (LAD), logistic regression and hinge-loss as special cases. For such estimators and Gaussian measurement vectors, we compute their asymptotic performance in the high-dimensional linear regime in which $m,n\rightarrow+\infty$ and $m/n\rightarrow\delta\in(1,+\infty)$. Importantly, our results are \emph{sharp}. In contrast to existing related results which are order-wise (i.e., they involve unknown or loose constants) this allows us to accurately compare the relative performance of different methods (e.g., LS vs LAD).
%
  It is worth mentioning that while our predictions are asymptotic, our numerical illustrations suggest that they are valid for dimensions $m$ and $n$ that are as small as a few hundreds. 


\subsection{Contributions}
 Our goal is to recover $\x_0\in\R^n$ from measurements $y_i=\sign(\ab_i^T\x_0),~i=1,\ldots,m$, where $\ab_i\in\R^n$ have entries iid Gaussian. The results account for possible corruptions by allowing each measurement $y_i$ to be sign-flipped with constant probability $\eps\in[0,1/2]$ (see Section \ref{sec:form} for details). We study the asymptotic performance of estimators $\hat\x_\ell$ that are solutions to the following optimization problem for some convex loss function $\ell(\cdot)$.
\bea\label{eq:intro_opt}
\xh_\ell := \arg\min_\x \sum_{i=1}^{m} \ell(y_i\ab_i^T\x).
\eea 
 When $m,n\rightarrow+\infty$ and $m/n\rightarrow\delta>\delta_\eps^\star$, we show that the correlation of $\xh_\ell$ to the true vector $\x_0$ is sharply predicted by $\sqrt{\frac{1}{1+({\al}/{\mu})^2}}$ where the parameters $\alpha$ and $\mu$ are the solutions to a system of three non-linear equations in three unknowns. We find that the system of equations (and thus, the value of $\alpha/\mu$) depends on the loss function $\ell(\cdot)$ through its Moreau envelope function. We prove that $\delta_\eps^\star>1$ is necessary for the equations to have a bounded solution, but, in general, the value of the threshold $\delta_\eps^\star$ depends both on the noise level $\eps$ and on the loss function. For the general case where $\eps \in[0,1/2]$, we propose a method to find the upper bound on correlation which corresponds to the maximum value of correlation that can be achieved for any convex loss function
\par Despite the fact that it is not the main focus of this paper, we remark that our results hold for the general case where measurements $y_i$ are determined according to an arbitrary function $f:\mathbb{R}\rightarrow[0,1]$ (see \eqref{eq:gen_label}). Moreover as our analysis in Appendix \ref{sec:mainproof} shows, the system of equations introduced in Section \ref{sec:SOE} can be extended to address regularized loss functions i.e., 
\bea
\sum_{i=1}^{m} \ell(y_i\ab_i^T\x) + r\left\lVert\x\right\rVert_2^2.\nonumber
\eea
\par We specialize our general result to specific loss functions such as LS, LAD and hinge-loss. This allows us to numerically compare the performance of these popular estimators by simply evaluating the corresponding theoretical predictions. Our numerical illustrations corroborate our theoretical predictions. For LS, our equations can be solved in closed form and recover the result of \cite{NIPS} (see Section \ref{sec:prior}). For the hinge-loss, we show that $\delta_\eps^\star$ is a decreasing function of $\eps$ that approaches $+\infty$ in the noiseless case and $2$ when $\eps=1/2$. 
 We believe that our work opens the possibility for addressing several important open questions, such
 as finding the optimal choice of the loss function in \eqref{eq:intro_opt} and the value of $\delta_\eps^\star$ for general loss functions.

\subsection{Prior work}\label{sec:prior}
As mentioned, over the past two decades there has been a very long list of works that derive statistical guarantees for high-dimensional estimation problems. In particular, many of these are concerned with convex optimization-based inference methods. Our work is most closely related to the following two lines of research.

\vp
\noindent\emph{(a)~Sharp asymptotic predictions for noisy linear measurements.} Most of the results in the literature of high-dimensional statistics are order-wise in nature. Sharp asymptotic predictions have only recently appeared in the literature for the case of noisy linear measurements with Gaussian measurement vectors. There are by now three different approaches that have been used (to different extent each) towards asymptotic analysis of convex regularized estimators: (a) the one that is based on the approximate message passing (AMP) algorithm and its state-evolution analysis; \cite{AMP,donoho2011noise,bayati2011dynamics,montanariLasso,donoho2016high} (b) the one that is based on Gaussian process (GP) inequalities, specifically the convex Gaussian min-max Theorem (CGMT); \cite{Sto,Cha,StoLASSO,OTH13,COLT,Master} (c) and the ``leave-one-out" approach \cite{karoui2013asymptotic,karoui15}. The three approaches are quite different to each other and each comes with its unique distinguishing features and disadvantages. A detailed comparison is beyond our scope. In this paper, we follow the GP approach and build upon the CGMT. 
Since concerned with linear measurements, these previous works consider estimators that solve minimization problems of the form
\bea\label{eq:intro_opt2}
\xh := \arg\min_\x \sum_{i=1}^{m} \widetilde{\ell}(y_i-\ab_i^T\x) + r R(\x)
\eea
Specifically, the loss function $\widetilde\ell(\cdot)$ penalizes the residual. In this paper, we extend the applicability of the CGMT to optimization problems in the form of \eqref{eq:intro_opt}. For our case of signed measurements, \eqref{eq:intro_opt} is more general than \eqref{eq:intro_opt2}. To see this, note that for $y_i\in\pm 1$ and popular symmetric loss functions $\widetilde{\ell}(t)=\widetilde{\ell}(-t)$ (e.g., LS, LAD), \eqref{eq:intro_opt} results in \eqref{eq:intro_opt2} by choosing $\ell(t)=\widetilde\ell(t-1)$ in the former. Moreover, \eqref{eq:intro_opt} includes several other popular loss functions such as the logistic loss and the hinge-loss which cannot be expressed by \eqref{eq:intro_opt2}.

\vp
\noindent\emph{(b)~One-bit compressed sensing.} Our works naturally relates to the literature of one-bit compressed sensing (CS) \cite{boufounos20081}. The vast majority of performance guarantees for one-bit CS are order-wise in nature, e.g., \cite{jacques2013robust,plan2013one,plan2012robust,PV15}. To the best of our knowledge, the only existing sharp results are presented in \cite{NIPS} for Gaussian measurement vectors. Specifically, the paper \cite{NIPS} derives the asymptotic performance of regularized LS for generalized nonlinear measurements, which include signed measurements as a special case. Our work can be seen as a direct extension of \cite{NIPS} to loss functions beyond least-squares, such as the hinge-loss. In fact, the result of \cite{NIPS} for our setting is a direct corollary of our main theorem (see Section \ref{sec:LS}). As in \cite{NIPS}, our proof technique is based on the CGMT.  

\vp
There are few works that consider general convex loss functions for estimating a signal from noisy measurements in high dimensions. In \cite{genzel2016nonas}, the general estimator $\ell(\langle\ab_i,\x\rangle,y_i)$ for estimating a structured signal in the non-asymptotic case has been studied. However it is assumed that the loss function satisfies some conditions including restricted strong convexity, continuously differentiability in the first argument and derivative of loss function being Lipschitz-continuous with respect to the second argument. The author furthermore derives some sufficient conditions for the loss function ensuring the restricted strong convexity condition. 
Our result in Theorem \ref{sec:lem} for achieving the best performance across all loss functions is comparable to \cite[Theorem 1]{bean2013optimal} in which they have also proposed a method for deriving optimal loss function and measuring its performance. However their results hold for measurements of the form $y_i=\ab_i^T\x_0+ \eps_i$, where $\{\eps_i\}_{i=1}^{m}$ are random errors independent of  $\ab_i$'s. 

\vp
Finally, our paper is closely related to \cite{candes2018phase,sur2019modern}, in which the authors study the high-dimensional performance of maximum-likelihood (ML) estimation for the logistic model. The ML estimator is a special case of \eqref{eq:intro_opt} but their measurement model differs from the one considered in this paper. Also, their analysis is based on the AMP. While this paper was being prepared, we became aware of \cite{salehi2019impact}, in which the authors extend the results of \cite{sur2019modern} to regularized ML by using the CGMT. However we present results for general loss functions and a different measurement model.

\subsection{Organization and notation}
The rest of the paper is organized as follows. Section \ref{sec:form} formally introduces the problem that this paper is concerned with. We present our main results Theorems \ref{thm:main} and \ref{sec:lem} in Section \ref{sec:gen}, where we also discuss some of their implications. In Section \ref{sec:cases}, we specialize the general result of Theorem \ref{thm:main} to the LS, LAD and hinge-loss estimators. We also present numerical simulations to validate our theoretical predictions. We conclude in Section \ref{sec:conc} with several possible directions for future research. A proof sketch of Theorem \ref{thm:main} is provided in Appendix \ref{sec:proof}.

The symbols $\Pro\left(\cdot\right)$ and $\Exp\left[\cdot\right]$ denote the probability of an event and the expectation of a random variable, respectively. We use boldface notation for vectors. $\|\vb\|_2$ denotes the Euclidean norm of a vector $\vb$. We write $i\in[m]$ for $i=1,2,\ldots,m$. 
 When writing $x_* = \arg\min_x f(x),$ we let the  operator $\arg\min$ return any one of the possible minimizers of $f$. For all $x\in\mathbb{R}$, Gaussian $Q$-function at $x$ is defined as $Q(x)= \int_{s=x}^{\infty}\frac{1}{\sqrt{2\pi}}e^{s^2/2} ds.$
For a random variable $H$ with density $p_H(h)$ that has a derivative $p_H^{'}(h)$ for all $h\in\mathbb{R}$, denote its score function $\ksi_h:=\frac{\partial}{\partial h}{\log p(h)}=\frac{p_H^{'}(h)}{p_H(h)}$. Fisher information of $H$ (e.g., \cite[Sec.~2]{barron1984monotonic}) is defined as
$\Ic(H):=\Exp[\,(\ksi_H)^2\,].$

\section{Problem statement}\label{sec:form}

The goal is to recover the unknown vector $\x_0\in\R^n$ from $m$ noisy signed measurements $y_i$. Let $\yo_i,~i\in[m]$ denote the \emph{noiseless} signed measurements $ \yo_i= \sign(\ab_i^T\x_0),~i\in[m]$, where $\ab_i\in\R^n$ are the measurement vectors. We assume the following noise model in which each measurement is corrupted (i.e., sign flipped) with some constant probability $\eps\in[0,1/2]$:
\bea\label{eq:gen_model}
y_i = \BSCe(\yo_i) := \begin{cases} \sign(\ab_i^T\x_0) &, \text{w.p.}~~1-\eps, \\  -\sign(\ab_i^T\x_0) &, \text{w.p.}~~\eps.  \end{cases}
\eea
We remark that all our results remain valid in the case of (potentially) adversarial noise in which $\eps\, m$ number of noiseless measurements $\yo_i$ are flipped. Nevertheless, for the rest of the paper, we focus on the measurement model in \eqref{eq:gen_model}.
This paper studies the recovery performance of estimates $\xh_{\ell}$ of $\x_0$ that are obtained by solving the following convex optimization problem:
\bea\label{eq:gen_opt}
\xh_{\ell} \in \arg\min_{\x} \frac{1}{m} \sum_{i=1}^{m} \ell(y_i \ab_i^T \x),
\eea
Here, $\ell:\R\rightarrow\R$ is a convex loss function and the subscript $\ell$ on the estimate $\xh_\ell$ emphasizes its dependence on the choice of the loss function. Different choices for $\ell(.)$ lead to popular specific estimators. For example, these include the following:
\begin{itemize}
\item Least-squares (LS): $\ell(t)=\frac{1}{2}(t-1)^2$,
\item Least-absolute deviations (LAD): $\ell(t)=|t-1|$,
\item Logistic maximum-likelihood: $\ell(t)=\log(1+e^{-t})$,
\item Ada-boost: $\ell(t)=e^{-t}$,
\item Hinge-loss: $\ell(t)=\max\{1-t\,,\,0\}$.
\end{itemize} 

Since we only observe sign-information, any information about the magnitude $\|\x_0\|_2$ of the signal $\x_0$ is lost. Thus, we can only hope to obtain an accurate estimate of the direction of $\x_0$.
We measure performance of the estimate $\xh_{\ell}$ by its (absolute) correlation value to $\x_0$, i.e.,
\bea\label{eq:corr}
\corr{\xh_\ell}{\x_0}:=\frac{|\,{\inp{\xh_\ell}{\x_0}}\,|}{\|\xh_\ell\|_2 \|\x_0\|_2} \in [0,1].
\eea
Obviously, we seek estimates that maximize correlation. \\
Although signed measurements obtained according to \eqref{eq:gen_model} are the main focus of this paper, we remark that all results are valid for the general case where measurements are determined according to :
\bea \label{eq:gen_label}
y_i = \begin{cases} 1 &, \text{w.p.}~~ f(\ab_i^T\x_0), \\ -1 &, \text{w.p.}~~1-f(\ab_i^T\x_0).  \end{cases}
\eea
where $f : \mathbb{R}\rightarrow[0,1]$. It is straightforward to see that choosing $f(t) = \frac{1}{2}+\frac{1-2\eps}{2} \sign(t)$ will give \eqref{eq:gen_model}.
Moreover as the analysis in Appendix \ref{sec:mainproof} suggests, we can extend our results to the broader class of optimization problems where there is an additional term for the penalty on the magnitude of $\x$, i.e.,
 \bea \label{eq:opt_reg}
 \xh_\ell = \arg\min_\x \sum_{i=1}^{m} \ell(y_i\ab_i^T\x) + r\left\lVert\x\right\rVert_2^2,
 \eea
where $r \in \{ \mathbb{R}^+\cup{0}\}$. However, the main body of this paper focuses on the case  $r=0$.\\
Our main result characterizes the asymptotic value of $\corr{\xh_{\ell}}{\x_0}$ in the linear high-dimensional regime in which the problem dimensions $m$ and $n$ grow proportionally to infinity with ${m}/{n}\rightarrow\delta\in(1,\infty).$ All our results are valid under the assumption that the measurement vectors have entries IID Gaussian. 

\begin{ass}[Gaussian measurement vectors]\label{ass:Gaussian}
 The vectors $\ab_i,~i\in[m]$ have entries IID standard normal $\Nn(0,1)$.
\end{ass}

We make no further assumptions on the distribution of the true vector $\x_0$.






%

%
\section{Main results}\label{sec:gen}

\subsection{Moreau Envelopes}

Before presenting our main result, we need a few definitions. We write
$$\env{\ell}{x}{\la}:=\min_{v}\frac{1}{2\la}(x-v)^2 + \ell(v),$$
for the \emph{Moreau envelope function} of the loss $\ell:\R\rightarrow\R$ at $x$ with parameter $\la>0$. Note that the objective function in the minimization above is strongly convex. Thus, for all values of $x$ and $\la$, there exists a unique minimizer which we denote by $\prox{\ell}{x}{\la}$. This is known as the \emph{proximal operator} of $\ell$ at $x$ with parameter $\la$. One of the important and useful properties of the Moreau envelope function is that it is continuously differentiable with respect to both $x$ and $\la$ \cite{rockafellar2009variational}. We denote these derivatives as follows
\bea\nn
\envdx{\ell}{x}{\la}&:=\frac{\partial{\env{\ell}{x}{\la}}}{\partial x},\\
\envdla{\ell}{x}{\la}&:=\frac{\partial{\env{\ell}{x}{\la}}}{\partial \la}.\nn
\eea
The following is a well-known result that is useful for our purposes.
\begin{propo}[Derivatives of $\Mc_{\ell}$~\cite{rockafellar2009variational}]
\label{propo:der}
For a function $\ell:\R\rightarrow\R$, and all $x\in\R$ and $\lambda>0$, the following properties are true:
\bea
\envdx{\ell}{x}{\la} &= \frac{1}{\la}{(x-\prox{\ell}{x}{\la})}, \nn \\
\envdla{\ell}{x}{\la} &= -\frac{1}{2\la^2}{(x-\prox{\ell}{x}{\la})^2}.\nn
\eea
If in addition $\ell$ is differentiable and $\ell^{'}$ denotes its derivative, then
\bea
\envdx{\ell}{x}{\la}&= \elld(\proxl{x}{\la}),\nn\\
\envdla{\ell}{x}{\la}&= -\frac{1}{2}(\elld(\proxl{x}{\la})^2.\nn
\eea

\end{propo}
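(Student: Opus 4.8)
The plan is to differentiate the Moreau envelope $\env{\ell}{x}{\la} = \min_v \tfrac{1}{2\la}(x-v)^2 + \ell(v)$ by exploiting the fact, guaranteed by strong convexity of the objective in $v$, that the minimizer $\prox{\ell}{x}{\la}$ is unique and varies continuously (indeed, by the implicit function theorem, smoothly enough for our purposes) with the parameters $x$ and $\la$. This is a textbook danskin/envelope-theorem computation, and since we are only asked to verify the stated formulas I will write the argument using the envelope theorem directly rather than re-deriving the regularity of the proximal map, which is the content of \cite{rockafellar2009variational} and may be cited.

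First I would treat the differentiable case, since it is the cleanest. Write $F(x,\la,v) := \tfrac{1}{2\la}(x-v)^2 + \ell(v)$, so that $\env{\ell}{x}{\la} = F(x,\la,p)$ where $p = p(x,\la) := \prox{\ell}{x}{\la}$. The first-order optimality condition in $v$ reads $\partial_v F(x,\la,p) = 0$, i.e.
\bea
\frac{1}{\la}(p - x) + \elld(p) = 0. \nn
\eea
By the envelope theorem, when we differentiate $\env{\ell}{x}{\la} = F(x,\la,p(x,\la))$ with respect to $x$ (resp. $\la$), the contribution through the $v$-argument vanishes because $\partial_v F = 0$ at the optimum, so $\envdx{\ell}{x}{\la} = \partial_x F(x,\la,p) = \tfrac{1}{\la}(x-p)$ and $\envdla{\ell}{x}{\la} = \partial_\la F(x,\la,p) = -\tfrac{1}{2\la^2}(x-p)^2$. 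This already proves the first pair of identities. To obtain the differentiable-case versions, substitute the optimality condition: from $\tfrac{1}{\la}(p-x) + \elld(p)=0$ we get $\tfrac{1}{\la}(x-p) = \elld(p) = \elld(\proxl{x}{\la})$, which gives $\envdx{\ell}{x}{\la} = \elld(\proxl{x}{\la})$; squaring and inserting into the $\la$-derivative yields $\envdla{\ell}{x}{\la} = -\tfrac{1}{2\la^2}(x-p)^2 = -\tfrac12(\elld(\proxl{x}{\la}))^2$.

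For the general (not necessarily differentiable) convex $\ell$, the same conclusion holds with $\elld$ replaced by a subgradient: the optimality condition becomes $0 \in \tfrac{1}{\la}(p-x) + \partial\ell(p)$, so $\tfrac{1}{\la}(x-p) \in \partial\ell(p)$, and the envelope-theorem step goes through verbatim because $F$ is jointly $C^1$ in $(x,\la)$ for fixed $v$ and the optimal $v=p$ is Lipschitz in $(x,\la)$ on compact sets; this is exactly the statement that $\Mc_\ell$ is $C^1$ with the stated gradients, cited from \cite{rockafellar2009variational}. The only mild subtlety — and the one place a reader might want more care — is justifying the interchange of differentiation and minimization, i.e. that the "inner" term $\partial_v F \cdot \partial_x p$ truly drops out even though $p$ may fail to be differentiable where $\ell$ is nonsmooth; this is handled by the standard one-sided difference-quotient argument (sandwiching $\env{\ell}{x+h}{\la} - \env{\ell}{x}{\la}$ between $F(x+h,\la,p(x+h,\la)) - F(x,\la,p(x+h,\la))$ and $F(x+h,\la,p(x,\la)) - F(x,\la,p(x,\la))$ and using continuity of $p$), but since Proposition \ref{propo:der} is quoted from the literature I would simply reference it. Thus there is no real obstacle here beyond carefully bookkeeping the optimality condition; the "hard part," such as it is, is only the routine justification of the envelope theorem in the nonsmooth case, which we outsource to \cite{rockafellar2009variational}.
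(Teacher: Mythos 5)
Your argument is correct: the envelope-theorem computation gives $\partial_x F = \tfrac{1}{\la}(x-p)$ and $\partial_\la F = -\tfrac{1}{2\la^2}(x-p)^2$ at the optimum, and substituting the stationarity condition $\tfrac{1}{\la}(x-p)=\elld(p)$ yields the differentiable-case formulas; your sandwich of difference quotients is the right way to justify dropping the inner term without assuming differentiability of the proximal map. Note that the paper itself offers no proof of this proposition --- it is quoted verbatim from \cite{rockafellar2009variational} as a known fact --- so there is nothing to compare against; your derivation is the standard one found there, and the only implicit hypothesis you should make explicit is convexity of $\ell$ (needed for the objective $F(x,\la,\cdot)$ to be strongly convex and for the prox to be single-valued and nonexpansive, hence continuous in $(x,\la)$).
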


\subsection{A system of equations} \label{sec:SOE}

It turns out, that the asymptotic performance of \eqref{eq:gen_opt} depends on the loss function $\ell$ via its Moreau envelope. Specifically, define random variables $G,S$ and $Y$ as follows (recall the definition of $\BSCe$ in \eqref{eq:gen_model})
\bea\label{eq:GSY}
G,S\simiid\Nn(0,1) \quad\text{and}\quad Y=\BSCe(\sign(S)),
\eea
and consider the following system of non-linear equations in three unknowns $(\mu,\alpha>0,\la)$:
\begin{subequations}\label{eq:eq_main}
\bea
 \Exp\left[Y\, S \cdot\envdx{\ell}{\ourx}{\la}  \right]&=0 , \label{eq:mu_main}\\
 {\la^2}\,{\delta}\,\Exp\left[\,\left(\envdx{\ell}{\ourx}{\la}\right)^2\,\right]&=\alpha^2 ,
\label{eq:alpha_main}\\
\lambda\,\delta\,\E\left[ G\cdot \envdx{\ell}{\ourx}{\la}  \right]&=\alpha.
\label{eq:lambda_main}
\eea
\end{subequations}
The expectations above are with respect to the randomness of the random variables $G$, $S$ and $Y$. As we show shortly, the solution to these equations is tightly connected to the asymptotic behavior of the optimization in \eqref{eq:gen_opt}. 

%

We remark that the equations are well defined even if the loss function $\ell$ is not differentiable. If $\ell$ is differentiable then, using Proposition \ref{propo:der} the Equations \eqref{eq:eq_main}
 can be equivalently written as follows:
\begin{subequations}\label{eq:eq_main2}
\bea
 \Exp\left[Y\, S \cdot\elld\left( \proxl{\ourx}{\la} \right)  \right]\label{eq:mu_main2}&=0,\\
{\la^2}\,{\delta}\,\Exp\left[\left(\elld\left( \proxl{\ourx}{\la} \right)\right)^2\right]&=\alpha^2,  \label{eq:alpha_main2}\\
\lambda\,\delta\,\E\left[ G\cdot \elld\left(\proxl{\ourx}{\la}\right)  \right]&=\alpha. \label{eq:lambda_main2} 
\eea
\end{subequations}
Finally, if $\ell$ is two times differentiable then applying integration by parts  in Equation \eqref{eq:lambda_main2} results in the following reformulation of  \eqref{eq:lambda_main}:
\bea\label{eq:lambda_main3}
1 &= \lambda\,\delta\, \Exp\left[\frac{\elldd\left( \proxl{\ourx}{\la} \right)}{1+\la\, \elldd\left( \proxl{\ourx}{\la} \right)}\right].
\eea
Note that the system of equations in \eqref{eq:eq_main} and \eqref{eq:eq_main2} hold when there is no regularization i.e., $r=0$. For the general case, the system of equations can be extended to \eqref{eq:reg_main}.

\subsection{Asymptotic prediction}
We are now ready to state the main result of this paper.

\begin{thm}[General loss function]\label{thm:main}
 Let Assumption \ref{ass:Gaussian} hold and fix some $\eps\in[0,1/2]$ in \eqref{eq:gen_model}. Assume $\delta>1$ such that the set of minimizers in \eqref{eq:gen_opt} is bounded and the system of Equations \eqref{eq:eq_main} has a unique solution $(\mu,\al,\la)$, such that $\mu\neq 0$.  Let $\xh_\ell$ be as in \eqref{eq:gen_opt}. Then, in the limit of $m,n\rightarrow+\infty$, $m/n\rightarrow\delta$, it holds with probability one that

\bea\label{eq:corr_thm}
\lim_{n\rightarrow \infty} \corr{\xh_\ell}{\x_0} = \sqrt{\frac{1}{1+({\al}/{\mu})^2}}.
\eea

\noindent Moreover, 
\bea\label{eq:norm_thm}
\lim_{n\rightarrow \infty} \Big\|\xh_\ell-\mu\cdot\frac{\x_0}{\|\x_0\|_2}\Big\|_2^2 = \al^2.
\eea
\end{thm}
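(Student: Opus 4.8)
The plan is to apply the Convex Gaussian Min-max Theorem (CGMT) to the optimization in \eqref{eq:gen_opt}, following the now-standard recipe but adapted to the ``one-bit'' structure in which the loss acts on $y_i\ab_i^T\x$ rather than on a residual. First I would decompose any candidate $\x$ along the direction of $\x_0$ and its orthogonal complement, writing $\x = \mu\,\tfrac{\x_0}{\|\x_0\|_2} + \x_\perp$, so that $\ab_i^T\x$ splits into a term proportional to $\ab_i^T\x_0$ (which determines $\yo_i$, hence the sign flips defining $y_i$) and an independent Gaussian term $\ab_i^T\x_\perp \sim \Nn(0,\|\x_\perp\|_2^2)$. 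Using $y_i = \BSCe(\sign(\ab_i^T\x_0))$ and rotational invariance of the Gaussian measure, the objective $\frac1m\sum_i \ell(y_i\ab_i^T\x)$ becomes, in distribution, a function of two independent Gaussian matrices/vectors --- one capturing the ``signal'' coordinate $S_i \sim \ab_i^T\x_0$ and one a fresh Gaussian $\g$ contracted with $\x_\perp$. This puts the problem in the bilinear Primary Optimization (PO) form $\min_\x \max_{\u} \u^T \G \x_\perp + (\text{convex-concave terms})$ required to invoke the CGMT, yielding an Auxiliary Optimization (AO) of the form $\min_{\x}\max_{\u} \|\x_\perp\|_2 \g^T\u + \|\u\|_2 \h^T\x_\perp + \dots$, where $\g,\h$ are independent standard Gaussians.

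Next I would reduce the AO to a scalar (low-dimensional) deterministic optimization. After optimizing over the direction of $\u$ and of $\x_\perp$, the AO collapses to an optimization over the two scalars $\alpha := \lim\|\x_\perp\|_2$ (the norm of the orthogonal component) and $\mu$ (the component along $\x_0$), together with a dual scalar that will play the role of $\la$; concentration of the empirical averages $\frac1m\sum_i(\cdot)$ turns the sums into the expectations over $(G,S,Y)$ appearing in \eqref{eq:eq_main}. The Moreau envelope enters precisely here: minimizing $\frac1{2\la}(x-v)^2 + \ell(v)$ over $v$ for each coordinate is exactly the term produced when one ``completes the square'' in the AO after introducing the dual variable, so the scalarized AO objective is expressed through $\env{\ell}{\mu S + \alpha G}{\la}$ (with $Y S = |S|$ in distribution absorbing the sign structure; note $YS = \BSCe(\sign S)\cdot S$). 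The first-order stationarity conditions of this scalar problem in $(\mu,\alpha,\la)$, computed via Proposition~\ref{propo:der} for the derivatives of $\Mc_\ell$, are then shown to be exactly the system \eqref{eq:eq_main}: \eqref{eq:mu_main} is stationarity in $\mu$, \eqref{eq:alpha_main} and \eqref{eq:lambda_main} come from stationarity in $\alpha$ and in the dual variable. Uniqueness of the solution (assumed in the theorem) together with strong convexity/concavity of the scalar problem guarantees that the AO has a well-concentrated optimal value and a unique optimal $(\mu^\star,\alpha^\star)=(\mu,\alpha)$.

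Finally I would transfer these conclusions from the AO back to the PO. The CGMT's consequence for ``sets'': since the scalarized AO has a unique minimizer and is strongly convex around it, any near-optimal $\x$ of the PO must satisfy $\|\x_\perp\|_2 \to \alpha$ and $\langle \x, \x_0/\|\x_0\|_2\rangle \to \mu$ with probability one; this immediately gives \eqref{eq:norm_thm}, since $\|\xh_\ell - \mu \x_0/\|\x_0\|_2\|_2^2 = \|\x_{\perp}\|_2^2 \to \alpha^2$. Then \eqref{eq:corr_thm} follows from elementary algebra: $\corr{\xh_\ell}{\x_0} = |\mu|/\sqrt{\mu^2+\|\x_\perp\|_2^2} \to |\mu|/\sqrt{\mu^2+\alpha^2} = 1/\sqrt{1+(\alpha/\mu)^2}$. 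The boundedness hypothesis on the minimizer set is what lets us restrict the PO/AO to a compact set so that the CGMT applies, and the assumption $\mu\neq 0$ ensures the correlation limit is nonzero and the expression is well-defined.

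The main obstacle I anticipate is the rigorous passage to the scalarized AO and the verification that its stationarity equations coincide with \eqref{eq:eq_main}: handling the non-differentiable loss functions (LAD, hinge) requires working with the Moreau-envelope formulation throughout rather than with $\elld$, carefully justifying the exchange of limits/expectations and the uniform concentration of the per-coordinate minimizations over the relevant compact parameter range, and checking the convexity structure needed for the CGMT (the PO must be convex in $\x$ and the AO's min-max must be over a convex-compact and convex set) --- in particular arguing that one may localize to a compact feasible set using the assumed boundedness of the minimizers. A secondary subtlety is correctly tracking the joint law of $(S,Y)$ through the sign and $\BSCe$ operations so that the signal-direction term reduces to the clean expression $\ourx$ inside the Moreau envelope.
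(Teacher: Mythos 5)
Your proposal follows essentially the same route as the paper's proof: rotational invariance to place $\x_0$ along a coordinate axis, dualization of $u_i=y_i\ab_i^T\x$ to reach the bilinear PO form, the CGMT passage to the AO, scalarization over $(\mu,\alpha)$ and a dual scalar with the Moreau envelope arising from the square-completion trick, first-order optimality conditions yielding \eqref{eq:eq_main}, and transfer back to the PO. The only small slip is the parenthetical claim that $YS=|S|$ in distribution, which holds only when $\eps=0$; your subsequent identification $YS=\BSCe(\sign S)\cdot S$ is the correct quantity entering the expectations.
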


%

Theorem \ref{thm:main} holds for general loss functions. In Section \ref{sec:cases} we specialize the result to specific popular choices. We also present numerical simulations that confirm the validity of the predictions of Theorem \ref{thm:main} (see Figures \ref{fig:fig2}--\ref{fig:fig4}). Before that, in the following remarks we present a few notes on the conditions, interpretation and implications of the theorem. A proof outline is included in Appendix \ref{sec:proof}.

\begin{remark}[The role of $\mu$ and $\alpha$]
The theorem evaluates the asymptotic performance of the estimator $\xh_\ell$ for a convex loss function $\ell$ in \eqref{eq:gen_opt}. According to \eqref{eq:corr_thm}, the prediction for the limiting behavior of the correlation value is given in terms of $\sigma_\ell:=\alpha\big/\mu$, where $\mu$ and $\alpha$ are unique solutions of \eqref{eq:eq_main}. \emph{The smaller the value of $\sigma_\ell$ is, the larger becomes the correlation value.} Thus, the correlation value is fully determined by the ratio of the parameters $\alpha$ and $\mu$. Their individual role is clarified in \eqref{eq:norm_thm}. Specifically, according to \eqref{eq:norm_thm}, $\xh_{\ell}$ is a biased estimate of the true $\x_0$ and $\mu$ represents exactly that bias term. In other words, solving \eqref{eq:gen_opt} returns an estimator that is close to a $\mu$--scaled version of $\x_0$. When $\x_0$ and $\xh_{\ell}$ are scaled appropriately, then the L2 squared norm of their difference converges to $\alpha^2$.
\end{remark}
\begin{remark}[On the existence of a solution to \eqref{eq:gen_opt}] While $\delta>1$ is a necessary condition for the equations in \eqref{eq:eq_main} to have a solution, it is \emph{not} sufficient in general. This depends on the specific choice of the loss function. For example, in Section \ref{sec:LS}, we show that for the squared loss $\ell(t)=(t-1)^2$, the equations have a unique solution iff $\delta>1$. On the other hand, for  logistic regression and  hinge-loss, it is argued in Remark \ref{rem:threshold} that there exists a threshold value $\delta^\star_\eps:=\delta^\star(\eps)>2$ such that the set of minimizers in \eqref{eq:gen_opt} is unbounded if $\delta>\delta_{\eps}$. Hence, the theorem does not hold for $\delta<\delta^\star_\eps$.
 We conjecture that for these choices of loss, the equations \eqref{eq:eq_main} are solvable iff  $\delta>\delta_{\eps}$. Justifying this conjecture is an interesting direction for future work. More generally, we leave the study of sufficient and necessary conditions under which the equations \eqref{eq:eq_main} admit a unique solution to future work. 
\end{remark}

\begin{remark}[Bounded minimizers]\label{rem:threshold}
 Theorem \ref{thm:main} only holds in regimes for which the set of minimizers of \eqref{eq:gen_opt} is bounded. As we show here, this is $\emph{not}$ always the case. Specifically, consider non-negative loss functions $\ell(t)\geq 0$ with the property $\lim_{t\rightarrow+\infty} \ell(t)=0$. For example, the hinge-loss, Ada-boost and logistic loss all satisfy this property. Now, we show that for such loss functions the set of minimizers is unbounded if $\delta<\delta^\star_\eps$ for some appropriate $\delta^\star_\eps>2$. First, note that the set of minimizers is unbounded if the following condition holds:
\bea\label{eq:sep}
\exists~\x_s\in\R^p \quad\text{such that}\quad y_i\ab_i^T\x_s \geq 1, \quad \forall~i\in[m].
\eea
Indeed, if \eqref{eq:sep} holds then $\x=c\cdot\x_s$ with $c\rightarrow+\infty$, attains zero cost in \eqref{eq:gen_opt}; thus, it is optimal and the set of minimizers is unbounded. To proceed, we rely on a recent result by Candes and Sur \cite{candes2018phase} who prove that \eqref{eq:sep} holds iff \footnote{To be precise, \cite{candes2018phase} prove the statement for measurements $y_i,~i\in[m]$ that follow a logistic model. Close inspection of their proof shows that this requirement can be relaxed by appropriately defining the random variable $Y$ in \eqref{eq:threshold}.} \bea\label{eq:threshold}
\delta \leq \delta^\star_{\eps}:= \left(\min_{c\in\R}\E\left[\left(G+c\,S\,Y\right)_{-}^2\right]\right)^{-1},
\eea
where $G,S$ and $Y$ are random variables as in \eqref{eq:GSY} and $(t)_{-}:=\min\{0,t\}$. It can be checked analytically that $\delta^\star(\eps)$ is a decreasing function of $\eps$ with $\delta^\star(0^+)=+\infty$ and $\delta^{\star}(1/2)=2$. In Figure \ref{fig:thresholdfig}, we have numerically evaluated the threshold value $\delta^\star_\eps$ as a function of the corruption level $\eps$. For $\delta<\delta^\star_\eps$, the set of minimizers of the \eqref{eq:gen_opt} with logistic or hinge loss is unbounded. \\
\end{remark}
\begin{figure}
  \centering
    \includegraphics[width=8.2cm,height=7cm]{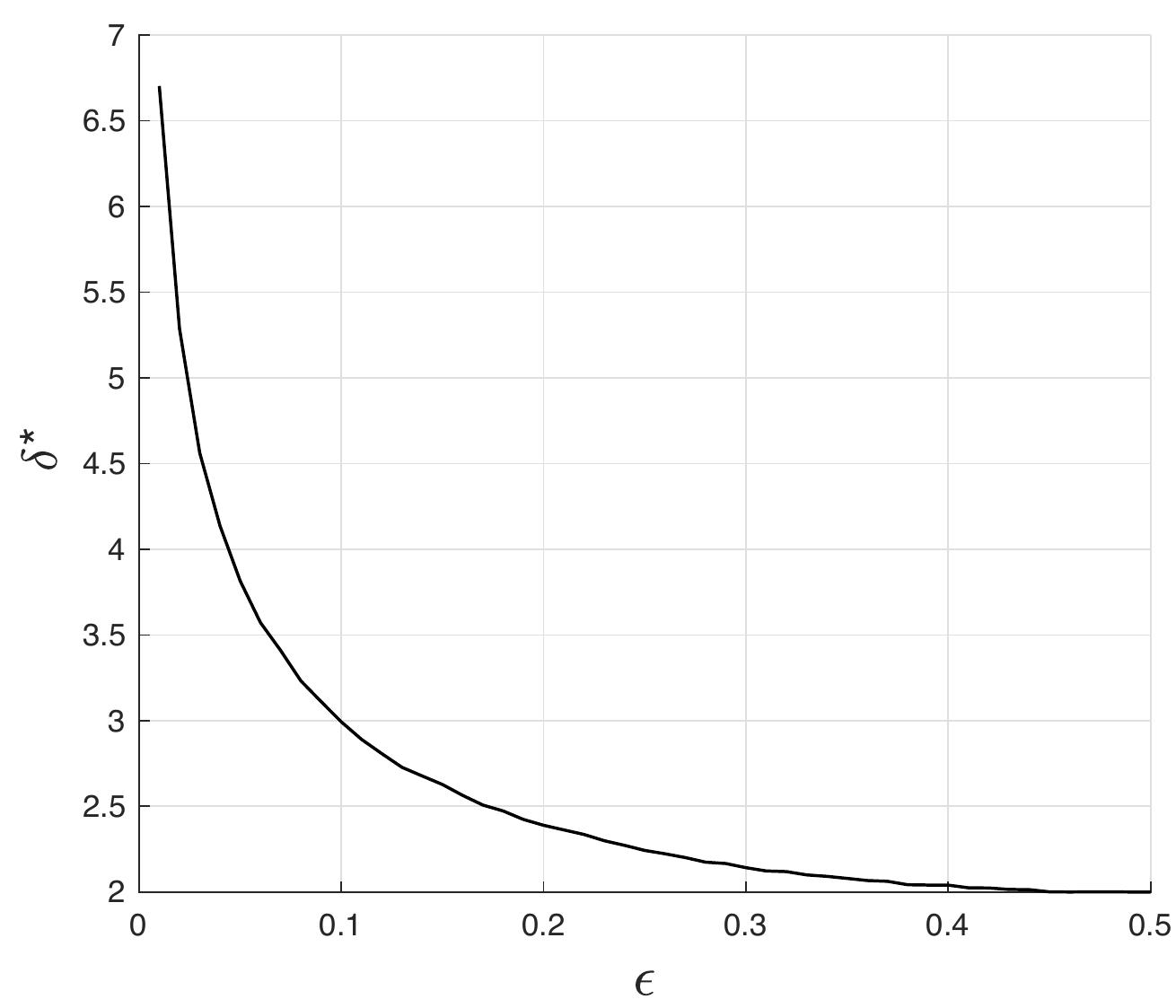}
      \caption{The value of the threshold $\delta^\star_\eps$ in \eqref{eq:threshold} as a function of probability of error $\eps\in[0,1/2]$. For logistic and hinge-loss, the set of minimizers in \eqref{eq:gen_opt} is bounded (as required by Theorem \ref{thm:main}) iff $\delta>\delta^\star_\eps$. See Remark \ref{rem:threshold} and \cite{candes2018phase}.}
          \label{fig:thresholdfig}
 \end{figure}
\begin{remark}[Why $\delta>1$] The theorem assumes that $\delta>1$ or equivalently $m>n$. Here, we show that this condition is \emph{necessary} for the equations \eqref{eq:eq_main} to have a bounded solution, or equivalently for the minimizer $\xh_\ell$ in \eqref{eq:gen_opt} to be bounded.
To see this, take squares in both sides of \eqref{eq:lambda_main} and divide by \eqref{eq:alpha_main}, to find that 
$$
\delta = \frac{\Exp\left[\,\left(\envdx{\ell}{\ourx}{\la}\right)^2\,\right]}{\left(\E\left[ G\cdot \envdx{\ell}{\ourx}{\la} \right]\right)^2} \geq 1.
$$
The inequality follows by applying Cauchy-Schwarz and using the fact that $\E[G^2]=1$.
\end{remark}
\begin{remark}[Solving the equations]\label{rem:FP}
 Evaluating the performance of $\xh_\ell$ requires solving the system of non-linear equations in \eqref{eq:gen_opt}. We empirically observe (see also \cite{Master} for similar observation) that if a solution exists, then it can be efficiently found by the following fixed-point iteration method. Let $\vb := [\mu,\alpha,\la]^T$ and $\Fc:\R^3\rightarrow\R^3$ be such that \eqref{eq:gen_opt} is equivalent to $\vb = \Fc(\vb)$. With this notation, initialize $\vb=\vb_0$ and for $k\geq 1$ repeat the iterations $\vb_{k+1} = \Fc(\vb_k)$ until convergence. 
\end{remark}

\subsection{Performance bounds}

In this section, we use the asymptotic prediction of Theorem \ref{thm:main} to derive an upper bound on the correlation value $\corr{\xh_\ell}{\x_0}$ that holds for \emph{all} choices of continuously differentiable loss functions. The following theorem is the key intermediate result in this direction. At a high-level, the proof of the theorem involves a careful algebraic manipulation of the system of equations \eqref{eq:eq_main}, and, leveraging properties of the Moreau envelope function.
%

\begin{thm}[Lower bound on $\sigma_{\ell}$]\label{sec:lem}Let $\ell$ be continuously differentiable and let assumptions and notation of Theorem \ref{thm:main} hold. Then,
\bea \label{eq:lem}
\sig_\ell^2 \, \Ic\big(\sig_\ell\, G + SY)  \geq \frac{1}{\delta}. 
\eea
\end{thm}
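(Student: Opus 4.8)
I would prove \eqref{eq:lem} by expressing the Fisher information $\Ic(\sigma_\ell G + SY)$ as a minimum-mean-square-error quantity and then comparing it to the two equations \eqref{eq:alpha_main}--\eqref{eq:lambda_main} via Cauchy--Schwarz. Set $Z := \sigma_\ell\,G + SY$, so that $\sigma_\ell = \al/\mu$ as in the statement. The key observation is that, since $\ourx = \mu(\sigma_\ell G + SY) = \mu Z$, the random variable
$$\rho \;:=\; \envdx{\ell}{\ourx}{\la} \;=\; \envdx{\ell}{\mu Z}{\la}$$
is a (measurable) function of $Z$ \emph{alone}; write $\rho = g(Z)$. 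Measurability is immediate because $x\mapsto\envdx{\ell}{x}{\la}$ is continuous (indeed Lipschitz, since by Proposition~\ref{propo:der} it equals $\tfrac1\la(x-\prox{\ell}{x}{\la})$ and the proximal map is nonexpansive).

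\emph{Step 1 (Fisher information as a conditional second moment).} Since $G\sim\Nn(0,1)$ is independent of $(S,Y)$, the variable $Z$ is a Gaussian smoothing of $W := SY$ with variance $\sigma_\ell^2>0$, hence $Z$ has the differentiable density $p_Z(z)=\Exp_W[\phi_{\sigma_\ell}(z-W)]$, where $\phi_{\sigma_\ell}$ is the $\Nn(0,\sigma_\ell^2)$ density. Using $\phi_{\sigma_\ell}'(t)=-t\,\sigma_\ell^{-2}\phi_{\sigma_\ell}(t)$ and differentiating under the expectation yields the standard score identity
$$\ksi_Z(z) \;=\; \frac{p_Z'(z)}{p_Z(z)} \;=\; -\frac{1}{\sigma_\ell^2}\,\Exp\big[\,z-W \;\big|\; Z=z\,\big] \;=\; -\frac{1}{\sigma_\ell}\,\Exp\big[\,G \;\big|\; Z=z\,\big],$$
the last step because $z-W=\sigma_\ell G$ on $\{Z=z\}$. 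Squaring and taking expectations,
$$\sigma_\ell^2\,\Ic(Z) \;=\; \sigma_\ell^2\,\Exp\big[\ksi_Z(Z)^2\big] \;=\; \Exp\big[\big(\Exp[G\mid Z]\big)^2\big]\;\le\;\Exp[G^2]=1 ,$$
so in particular the quantity is finite.

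\emph{Steps 2--3 (the fixed-point equations and Cauchy--Schwarz).} Squaring \eqref{eq:lambda_main} and dividing by \eqref{eq:alpha_main} eliminates $\la$ and $\al$ and gives $\tfrac1\delta = \big(\Exp[G\rho]\big)^2\big/\Exp[\rho^2]$, the identity already used in the ``Why $\delta>1$'' remark; here $\Exp[\rho^2]>0$, since $\rho\equiv 0$ a.s.\ would force $\al=0$ through \eqref{eq:alpha_main}. Because $\rho=g(Z)$, the tower property gives $\Exp[G\rho]=\Exp\big[\Exp[G\mid Z]\,g(Z)\big]$, whence
$$\big(\Exp[G\rho]\big)^2 \;=\; \big(\Exp\big[\,\Exp[G\mid Z]\,g(Z)\,\big]\big)^2 \;\le\; \Exp\big[\big(\Exp[G\mid Z]\big)^2\big]\cdot\Exp\big[g(Z)^2\big] \;=\; \Exp\big[\big(\Exp[G\mid Z]\big)^2\big]\cdot\Exp[\rho^2].$$
Dividing by $\Exp[\rho^2]$ and combining with Step 1 gives $\tfrac1\delta \le \Exp\big[(\Exp[G\mid Z])^2\big] = \sigma_\ell^2\,\Ic(\sigma_\ell G + SY)$, which is exactly \eqref{eq:lem}.

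\emph{Main obstacle.} The algebra (manipulating the equations, Cauchy--Schwarz, the tower property) is routine; the conceptual heart is Step 1 --- recognizing the MMSE-type representation $\sigma_\ell^2\,\Ic(\sigma_\ell G+SY)=\Exp[(\Exp[G\mid Z])^2]$, which is precisely what renders the loss-dependent quantity $\rho$, being a function of $Z$, comparable to the Fisher information. The technical care goes into justifying differentiation under the integral for $p_Z$ (legitimate thanks to the Gaussian convolution) and noting that the only structural fact used about the loss is that $\envdx{\ell}{\cdot}{\la}$ is a genuine function of its first argument (true for any convex $\ell$); in particular the argument never invokes \eqref{eq:mu_main}.
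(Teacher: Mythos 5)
Your proof is correct, and it arrives at \eqref{eq:lem} by a route that is equivalent at its core to the paper's but packaged differently. The paper first applies Gaussian integration by parts (Stein's lemma) to \eqref{eq:lambda_main} to get $1=\la\,\delta\,\E[\envddx{\ell}{\ourx}{\la}]$, eliminates $\la$ against \eqref{eq:alpha_main} to obtain $\al^2=\tfrac{1}{\delta}\,\E[(\envdx{\ell}{W}{\la})^2]\big/(\E[\envddx{\ell}{W}{\la}])^2$ with $W=\al G+\mu SY$, and then applies Cauchy--Schwarz between $\envdx{\ell}{W}{\la}$ and the score $\ksi_W$, using a second integration by parts to identify $\E[\envdx{\ell}{W}{\la}\,\ksi_W]$ with $-\E[\envddx{\ell}{W}{\la}]$; the scaling rule $\Ic(cH)=\Ic(H)/c^2$ finishes. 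You instead keep $\E[G\rho]$ intact, represent the score of the Gaussian-smoothed variable as $\ksi_Z(z)=-\sigma_\ell^{-1}\E[G\mid Z=z]$, and apply Cauchy--Schwarz between $\E[G\mid Z]$ and $g(Z)$. These are the same inequality in disguise: combining the paper's Stein step with its integration by parts gives exactly your identity $\E[\envdx{\ell}{W}{\la}\,\ksi_W]=-\al^{-1}\E[G\,\envdx{\ell}{W}{\la}]$. What your packaging buys is that the second derivative $\envddx{\ell}{\cdot}{\la}$ never appears, so you only need $\envdx{\ell}{\cdot}{\la}$ to be a measurable, square-integrable function of its first argument --- true for any convex $\ell$ by the nonexpansiveness of the proximal map --- whereas the paper invokes continuous differentiability of $\ell$ precisely to make sense of $\envddx{\ell}{\cdot}{\la}$; your version thus slightly relaxes the hypothesis. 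Your observation that \eqref{eq:mu_main} is never used matches the paper, and your bound $\sigma_\ell^2\,\Ic(Z)=\E[(\E[G\mid Z])^2]\le 1$ is a clean way to see the finiteness (and the upper limit visible in Figure \ref{fig:sigma}). The only assumptions you should flag as load-bearing are $\mu\neq 0$ (so that $\sigma_\ell$ is defined and $\rho$ is a function of $Z$ alone) and $\al>0$ (so that $Z$ is a genuine Gaussian smoothing and $\E[\rho^2]>0$); both are already part of the standing hypotheses of Theorem \ref{thm:main}.
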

\begin{proof} 
First, applying Gaussian integration by parts in \eqref{eq:lambda_main} we find that : $$1=\lambda\, \delta\,\E[\envddx{\ell}{\ourx}{\la}],$$
where $\envddx{\ell}{x}{\la}$ denotes the second derivative of the Moreau-envelope function with respect to the first argument (which exists since $\ell$ is differentiable). Solving for $\lambda$ and substituting in \eqref{eq:alpha_main} gives :
\bea\label{eq:lower}
\alpha^2 = \frac{1}{\delta}\cdot\frac{ \E\left[\left(\envdx{\ell}{\ourx}{\la}\right)^2\right] }{\left(\,\E[\envddx{\ell}{\ourx}{\la}]\,\right)^2}.
\eea
The rest of the proof follows steps similar to \cite[Lem.~3.4]{donoho2016high} and \cite[Rem.~5.3.3]{Master}. 
Call $$Z:=\mu S Y\quad\text{and}\quad W:=\alpha G + Z.$$
Note that for $\alpha\neq 0$, $W$ has a continuously differentiable function at every $w$; in fact,
$$
p_W^{'}(w) = \int \phi_{\alpha}^{'}(u) p_{Z}(w-u)\, \mathrm{d}u,
$$
where $\phi_{\alpha}(u)=\frac{1}{\alpha\sqrt{2\pi}}e^{-\frac{u^2}{2\alpha^2}}$.

With this notation, by Cauchy-Schwartz inequality we have:
\begin{align*}
\E\left[\left(\envdx{\ell}{W}{\la}\right)^2\right]\cdot \Ic(W) 
&\geq\left(\E[\envdx{\ell}{W}{\la}\cdot \ksi_W]\right)^2\\
&=\left(\E[\envddx{\ell}{W}{\la}]\right)^2,
\end{align*}
where the last line follows by integration by parts.
This, when combined with \eqref{eq:lower} shows that: 
\bea\label{eq:lower2}
\alpha^2 \geq \frac{1}{\delta}\, \frac{1}{\Ic(W)}.
\eea
Recall that $W=\alpha G + \mu SY$ and $G\sim\Nn(0,1)$. Now, since (e.g., \cite[Eqn.~2.13]{barron1984monotonic})
$$
\Ic(c\cdot H) = \Ic(H)/c^2,
$$
we have that
$$
\Ic(W) = \Ic\left(\mu \,\Big(\frac{\alpha}{\mu} G + SY \Big)\right) = \frac{1}{\mu^2}\Ic\big(\sig_\ell\, G + SY\big),
$$
where recall that 
$$
\sig_\ell := \alpha\big/\mu.
$$
Substituting this in \eqref{eq:lower2} gives the desired inequality in the statement of the theorem. 
\end{proof}




%
As it is illustrated in Figure \ref{fig:sigma},  $\sig_\ell^2 \, \Ic\big(\sig_\ell\, G + SY)$ is an increasing function of $\sigma_{\ell}$. Therefore the minimum value of $\sigma_{\ell}$ can be derived from the inequality in \eqref{eq:lem}, which corresponds to the optimum $\sigma_{\ell}$ that can be achieved for any continuously differentiable loss function. In the following two remarks, this result is extended to achieve the optimum values for correlation.

\begin{remark}[Upper bound on correlation] \label{rem:nub}
Using \eqref{eq:lem} with $Y = \BSC_{\epsilon}(\sign(S))$ and by numerically evaluating $\Ic(\sigma_{\ell}G+SY)$ based on $\sigma_{\ell}$\,, we can find a numerical lower bound on $\sigma_{\ell}$\,. In view of \eqref{eq:corr_thm} this directly yields an upper bound on $\corr{\xh_\ell}{\x_0}$, i.e., the maximum correlation that can be achieved for any choice of (continuously differentiable) convex loss function. 
In Figures \ref{fig:fig2}-\ref{fig:fig4}, the green lines show the upper bound on correlation for specific values of $\eps$; see Section \ref{sec:sim} for details. 
\end{remark}
\begin{figure}
  \centering
    \includegraphics[width=7.9cm,height=6.6cm]{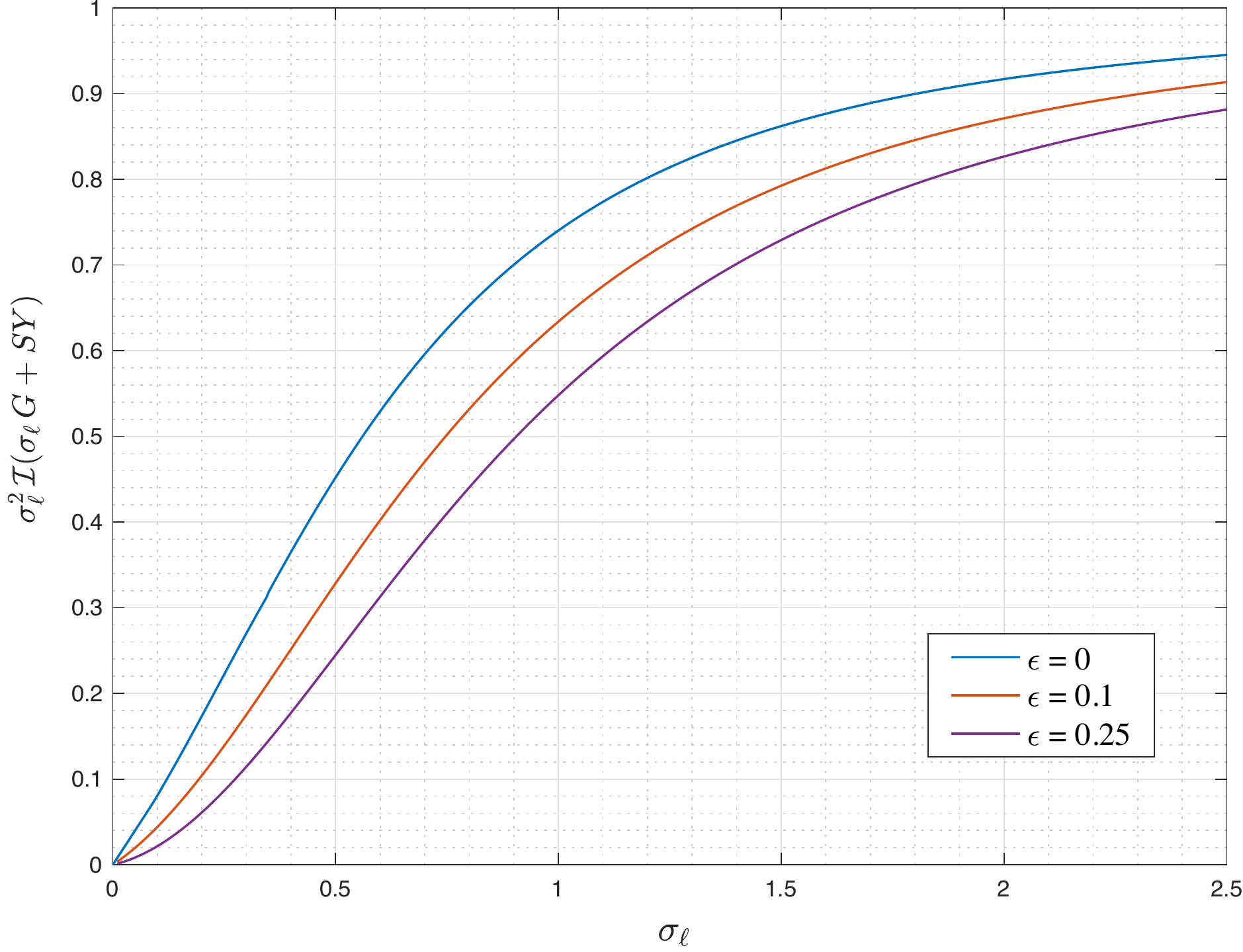}
      \caption{Numerical evaluations of $\sigma_{\ell}^2\Ic(\sigma_{\ell}G+SY)$, as defined in Theorem \ref{sec:lem}, with respect to $\sigma_{\ell}$ for $\epsilon$ = 0, 0.1 and 0.25 (recall \eqref{eq:GSY}). $\sigma_{\ell}^2\Ic(\sigma_{\ell}G+SY) \in [0,1)$ and it is an increasing function of $\sigma_{\ell}$, which implies the existence and uniqueness of minimum value of $\sigma_{\ell}$ for all $\delta>1$.
     }
                  \label{fig:sigma}
\end{figure}



\section{Special cases}\label{sec:cases}

In this section, we apply the general result of Theorem \ref{thm:main} to specific popular choices of the loss function.

\subsection{Least-squares}\label{sec:LS}

By choosing $\ell(t)=(t-1)^2$ in \eqref{eq:gen_opt}, we obtain the standard least squares estimate. To see this, note that since $y_i=\pm 1$, it holds for all $i$ that
$
(y_i\ab_i^T\x-1)^2 = (y_i-\ab_i^T\x)^2.
$

Thus, the estimator $\xh$ is minimizing the sum of squares of the residuals:
\bea\label{eq:LS}
\xh=\arg\min_\x \sum(y_i-\ab_i^T\x)^2.
\eea

For the choice $\ell(t)=(t-1)^2$, it turns out that we can solve the equations in \eqref{eq:eq_main} in closed form. The final result is summarized in the corollary below. 

\begin{cor}[Least-squares]\label{cor:LS}
 Let Assumption \ref{ass:Gaussian} hold and $\delta>1$. Let $\xh$ be as in \eqref{eq:LS}. Then, in the limit of $m,n\rightarrow+\infty$, $m/n\rightarrow\delta$, Equations \eqref{eq:corr_thm} and \eqref{eq:norm_thm} hold with probability one with $\alpha$ and $\mu$ given as follows:
%
\bea
\mu &= (1-2\epsilon)\sqrt{\frac{2}{\pi}}, \label{eq:mu_LS} \\
\alpha^2 &=\left(1-(1-2\epsilon)^2\,\frac{2}{\pi}\right)\frac{1}{\delta-1}.\label{eq:alpha_LS}
\eea
\end{cor}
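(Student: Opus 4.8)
The plan is to specialize the differentiable form of the system \eqref{eq:eq_main2} to $\ell(t)=(t-1)^2$, for which the proximal map — and hence $\elld\bigl(\proxl{\cdot}{\la}\bigr)$ — is explicit and \emph{affine} in its argument, and then to evaluate the three Gaussian expectations in closed form. First I would record that $\ell(t)=(t-1)^2$ is smooth and strictly convex and that, since $\ell(y_i\ab_i^T\x)=(y_i\ab_i^T\x-1)^2=(y_i-\ab_i^T\x)^2$ for $y_i\in\{\pm1\}$, the program \eqref{eq:gen_opt} is exactly the least-squares problem \eqref{eq:LS}; for $\delta>1$ the Gaussian design matrix has full column rank almost surely, so the minimizer $\xh_\ell$ is unique and bounded. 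Thus the only hypotheses of Theorem \ref{thm:main} left to verify are that \eqref{eq:eq_main2} has a unique solution with $\mu\neq 0$, which the computation below produces constructively.

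Next I would compute the proximal operator. Minimizing $\frac{1}{2\la}(x-v)^2+(v-1)^2$ over $v$ gives $\proxl{x}{\la}=\frac{x+2\la}{1+2\la}$, hence $\elld(\proxl{x}{\la})=2\bigl(\proxl{x}{\la}-1\bigr)=\frac{2(x-1)}{1+2\la}$. Substituting $x=\ourx$, every term $\envdx{\ell}{\ourx}{\la}=\frac{2}{1+2\la}(\al G+\mu SY-1)$ appearing in \eqref{eq:eq_main2} is affine in $(G,S,Y)$, so the three equations become elementary first- and second-moment identities for $G,S\simiid\Nn(0,1)$ and $Y=\BSCe(\sign(S))$ from \eqref{eq:GSY}.

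Then I would evaluate those moments, using $G\perp(S,Y)$, $\Exp[G]=0$, $\Exp[G^2]=1$, $Y^2=1$, $\Exp[S^2]=1$, and the key identity $\Exp[SY]=(1-\eps)\Exp[|S|]-\eps\,\Exp[|S|]=(1-2\eps)\sqrt{2/\pi}$. Equation \eqref{eq:mu_main2} reads $\frac{2}{1+2\la}\bigl(\mu-(1-2\eps)\sqrt{2/\pi}\bigr)=0$, forcing $\mu=(1-2\eps)\sqrt{2/\pi}$, i.e. \eqref{eq:mu_LS}. Equation \eqref{eq:lambda_main2} reads $\frac{2\la\delta}{1+2\la}\,\al=\al$; since $\al>0$ this gives $\la=\frac{1}{2(\delta-1)}$, which is well-defined exactly for $\delta>1$ and yields $\frac{2\la}{1+2\la}=\frac1\delta$. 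Finally, with $\mu=(1-2\eps)\sqrt{2/\pi}$ one has $\Exp[(\mu SY-1)^2]=1-\mu^2$, hence $\Exp[(\al G+\mu SY-1)^2]=\al^2+1-\mu^2$, so \eqref{eq:alpha_main2} becomes $\frac{4\la^2\delta}{(1+2\la)^2}(\al^2+1-\mu^2)=\al^2$; plugging in $\la=\frac{1}{2(\delta-1)}$ collapses this to $\frac1\delta(\al^2+1-\mu^2)=\al^2$, i.e. $\al^2=\frac{1-\mu^2}{\delta-1}=\bigl(1-(1-2\eps)^2\tfrac2\pi\bigr)\frac1{\delta-1}$, which is \eqref{eq:alpha_LS}. (As a consistency check, the twice-differentiable reformulation \eqref{eq:lambda_main3} with $\elldd\equiv 2$ gives the same equation $\frac{2\la\delta}{1+2\la}=1$.)

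I do not anticipate a substantive obstacle: the argument is a direct, if slightly tedious, specialization of Theorem \ref{thm:main}. The only points needing care are (i) checking that the hypotheses of Theorem \ref{thm:main} hold for $\delta>1$ — bounded and unique minimizer, and unique solution of \eqref{eq:eq_main} — where uniqueness of the triple $(\mu,\al,\la)$ follows because each of the three steps above was forced; and (ii) the division by $\al$ in \eqref{eq:lambda_main2}, which is legitimate since the resulting $\al^2$ in \eqref{eq:alpha_LS} is strictly positive for every finite $\delta>1$ (because $(1-2\eps)^2\tfrac2\pi\le\tfrac2\pi<1$). Once $\mu$ and $\al$ are identified, \eqref{eq:corr_thm} and \eqref{eq:norm_thm} are immediate from Theorem \ref{thm:main}.
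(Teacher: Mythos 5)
Your proposal is correct and follows essentially the same route as the paper: compute $\prox{\ell}{x}{\la}=(x+2\la)/(1+2\la)$, read off $\mu$ from \eqref{eq:mu_main2} via $\E[SY]=(1-2\eps)\sqrt{2/\pi}$, get $\la=\tfrac{1}{2(\delta-1)}$ from the third equation (the paper uses the reformulation \eqref{eq:lambda_main3}, you use \eqref{eq:lambda_main2} directly — these coincide here since the relevant quantities are affine in $G$), and substitute into \eqref{eq:alpha_main2} to obtain $\al^2=\frac{1-\mu^2}{\delta-1}$. Your added remarks on verifying boundedness of the minimizer for $\delta>1$ and on the forced uniqueness of $(\mu,\al,\la)$ are sensible but do not change the argument.
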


\begin{proof} In order to get the values of $\alpha$ and $\mu$ as in the statement of the corollary, we show how to simplify Equations \eqref{eq:eq_main} for $\ell(t)=(t-1)^2$. In this case, the proximal operator admits a simple expression:
\begin{equation}
\prox{\ell}{x}{\la} = ({x+2\la})\Big/({1+2\la}).\nn
\end{equation}
Also, $\elld(t)=2(t-1)$.
Substituting these in \eqref{eq:mu_main2} gives the formula for $\mu$ as follows:
\bea\nn
0 &= \E\left[YS(\alpha G + \mu SY - 1)\right] = \mu\, \E[S^2] - \E[YS]\\
&\qquad\qquad\qquad \Longrightarrow
\mu = \sqrt{\frac{2}{\pi}}(1-2\epsilon), \nn
\eea
where we have also used from \eqref{eq:GSY} that $\E[S^2]=1$, $\E[YS]=(1-2\eps)\sqrt{\frac{2}{\pi}}$ and $G$ is independent of $S$.
Also, since $\elldd(t)=2$, direct application of \eqref{eq:lambda_main3} gives
\bea
1 = \lambda\delta\,\frac{2}{1+2\la}\Longrightarrow \la = \frac{1}{2(\delta-1)}\nn.
\eea
Finally, substituting the value of $\lambda$ in \eqref{eq:alpha_main2} we obtain the desired value for $\alpha$ as follows
\begin{align*}
\alpha^2 &= 4 \lambda^2 \delta\,\mathbb{E}\left[(\prox{\ell}{\ourx}{\la}-1)^2\right] \\
&= \frac{4\lambda^2}{(1+2\la)^2}\delta\,\mathbb{E}\left[(\alpha G+\mu S Y -1)^2 \right] \\
&=\frac{4\la^2\delta}{(1+2\la)^2}(\alpha^2+1 -\frac{2}{\pi}(1-2\epsilon)^2)\\
&=\frac{1}{\delta}(\alpha^2+1-\frac{2}{\pi}(1-2\epsilon)^2)\label{eq:alpha}\quad \Longrightarrow \,\eqref{eq:alpha_LS} .
\end{align*}
\end{proof}

\begin{remark}[Least-squares: One-bit vs signed measurements]
On the one hand, Corollary \ref{cor:LS} shows that least-squares for (noisy) one-bit measurements lead to an estimator that satisfies
\begin{equation}\label{eq:norm_LS}
\lim_{n\rightarrow \infty} \Big\|\xh-\frac{\mu}{\|\x_0\|_2}\cdot{\x_0}\Big\|_2^2 = \tau^2\cdot \frac{1}{\delta-1},
\end{equation}
where $\mu$ is as in \eqref{eq:mu_LS} and $\tau^2:=1-(1-2\eps)\frac{2}{\pi}$. On the other hand, it is well-known (e.g., see references in \cite[Sec.~5.1]{Master}) that least-squares for (scaled) linear measurements with additive Gaussian noise (i.e. $y_i= \rho \ab_i^T\x_0 + \sigma z_i$, $z_i\sim\Nn(0,1)$) 
leads to an estimator that satisfies
\bea
\lim_{n\rightarrow \infty} \|\xh-\rho\cdot{\x_0}\|_2^2 = \sigma^2\cdot \frac{1}{\delta-1}.\label{eq:norm_LS_lin}
\eea
Direct comparison of \eqref{eq:norm_LS} to \eqref{eq:norm_LS_lin} suggests that least-squares with one-bit measurements performs the same as if measurements were linear with scaling factor $\rho=\mu/\|\x_0\|_2$ and noise variance $\sigma^2=\tau^2=\alpha^2(\delta-1)$. This worth-mentioning conclusion is not new; it was proved in \cite{Bri,PV15,NIPS}. We elaborate on the relation to this prior work in the following remark.
\end{remark}

\begin{remark}[Prior work]
There is a lot of recent work on the use of least-squares-type estimators for recovering signals from nonlinear measurements of the form $y_i=f(\ab_i^T\x_0)$ with Gaussian vectors $\ab_i$. The original work that suggests least-squares as a reasonable estimator in this setting is due to Brillinger \cite{Bri}. In his 1982 paper, Brillinger studied the problem in the classical statistics regime (aka $n$ is fixed not scaling with $m\rightarrow+\infty$) and he proved for the least-squares solution satisfies
$$
\lim_{m\rightarrow+\infty} \frac{1}{m}\|\xh-\frac{\mu}{\|\x_0\|_2}\cdot{\x_0}\|_2^2 = \tau^2,
$$
where
\begin{align}
\mu &= \E[S f(S)],\quad\quad\qquad S\sim\Nn(0,1),\nn\\
\tau^2 &= \E[(f(S)-\mu S)^2].\label{eq:Bri}
\end{align}
and the expectations are with respect to $S$ and possible randomness of $f$. Evaluating \eqref{eq:Bri} for $f(S)=\BSCe(\sign(S))$ leads to the same values for $\mu$ and $\tau^2$ in \eqref{eq:norm_LS}. In other works, \eqref{eq:norm_LS} for $\delta\rightarrow+\infty$ indeed recovers Brillinger's result. The extension of Brillinger's original work to the high-dimensional setting (both $m,n$ large) was first studied by Plan and Vershynin \cite{PV15}, who derived (non-sharp) non-asymptotic upper bounds on the performance of constrained least-squares (such as the Lasso). Shortly after, \cite{NIPS} extended this result to \emph{sharp} asymtpotic predictions and to regularized least-squares. In particular, Corollary \ref{cor:LS} is a special case of the main theorem in \cite{NIPS}. Several other interesting extensions of the result by Plan and Vershynin have recently appeared in the literature, e.g., \cite{genzel2017high,goldstein2018structured,genzel2017recovering,thrampoulidis2018generalized}. However, \cite{NIPS} is the only one to give results that are sharp in the flavor of this paper. Our work, extends the result of \cite{NIPS} to general loss functions beyond least-squares. The techniques of \cite{NIPS} that have guided the use of the CGMT in our context have also been recently applied in \cite{PhaseLamp} in the context of phase-retrieval.
%
\end{remark}


\begin{figure}
  \centering
    \includegraphics[width=8cm,height=6.6cm]{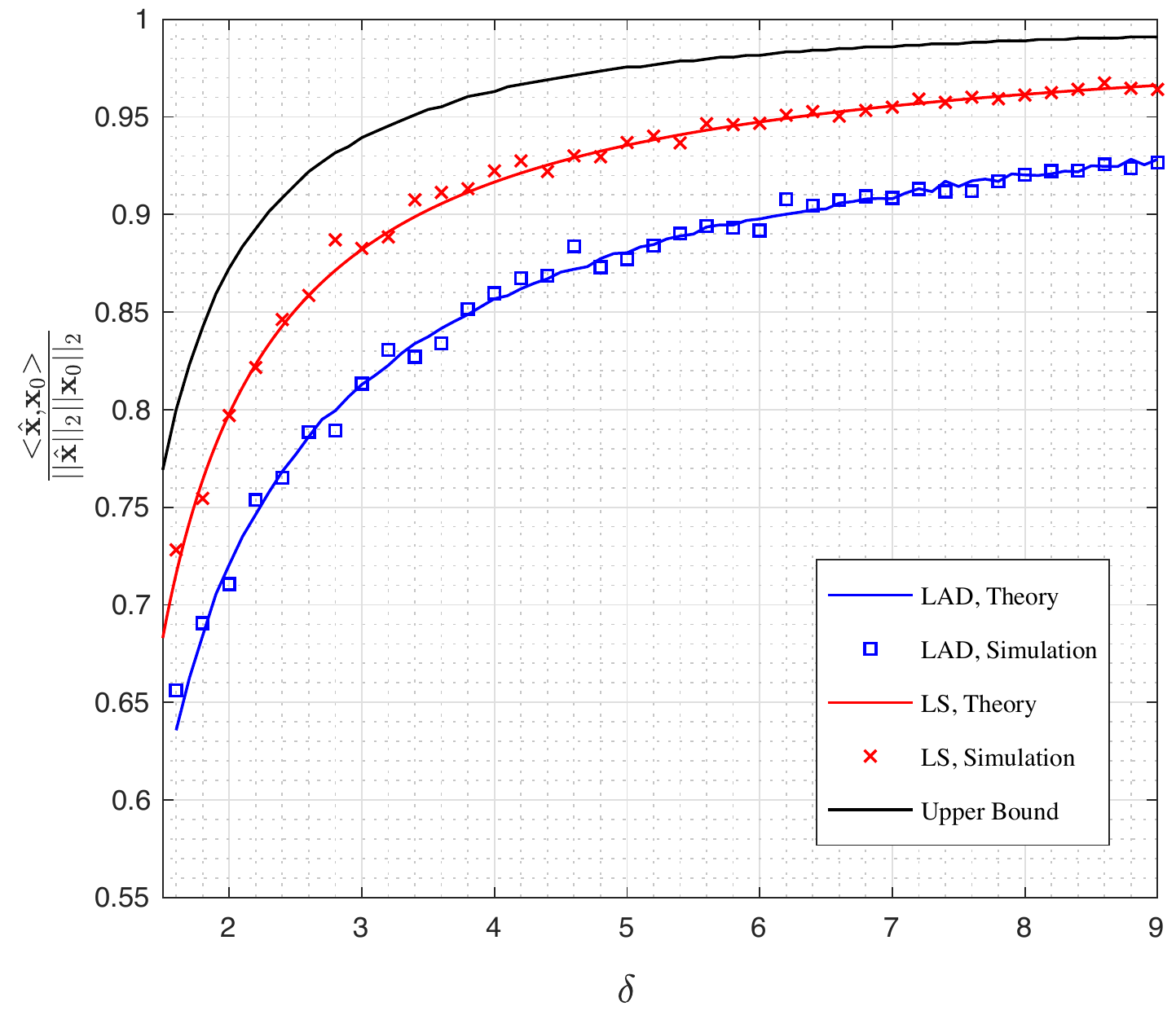}
      \caption{Comparisons between theoretical and simulated results for the least-squares (LS) and least-absolute deviations (LAD) estimators along with the upper bound, as a function of $\delta$, for noiseless measurements ($\epsilon=0$). The LS estimator significantly outperforms the LAD for all values of $\delta$.}
      \label{fig:fig2}
\end{figure}

\subsection{Least-absolute deviations} \label{sec:LAD}

By choosing $\ell(t)=|t-1|$ in \eqref{eq:gen_opt}, we obtain a least-absolute deviations estimate. Again, since $y_i=\pm 1$, it holds for all $i$ that
$
|y_i\ab_i^T\x-1| = |y_i-\ab_i^T\x|.
$
Thus, this choice of loss function leads to residuals:
\bea\label{eq:LAD}
\xh=\arg\min_\x \sum |y_i-\ab_i^T\x|.
\eea

As in Section \ref{sec:LS}, for $\ell(t) = |t-1|$ the proximal operator admits a simple expression, as follows:
\bea
\prox{\ell}{x}{\lambda} = 1+ \soft{x-1}{\la}
\eea
where 
$$
\soft{x}{\la} =   \begin{cases}
      x-\lambda, & \text{if}\ x>\lambda, \\
      x+\lambda, & \text{if}\ x<-\lambda, \\
      0,&\text{otherwise}.
    \end{cases}
$$
is the standard soft-thresholding function.


\subsection{Hinge-loss}
We obtain the hinge-loss estimator in by setting $\ell(t) = \max(1-t,0)$ in \eqref{eq:gen_opt}. Similar to Section \ref{sec:LAD}, the  proximal operator of the hinge-loss can be expressed in terms of the soft-thresholding function as follows:
\begin{align*}
\prox{\ell}{x}{\la} = 1 +\soft{x+\frac{\la}{2}-1}{\frac{\la}{2}}.
\end{align*}
As already mentioned in Remark \ref{rem:threshold}, the set of minimizers of the hinge-loss is bounded (required by Theorem \ref{thm:main}) only for $\delta>\delta^\star_\eps$ where $\delta^\star_\eps$ is the value of the threshold in \eqref{eq:threshold}. Our numerical simulations in Figures \ref{fig:fig3} and \ref{fig:fig4} suggest that hinge-loss is robust to measurement corruptions, as for moderate to large values of $\delta$ it outperforms the LS and the LAD estimators. Theorem \ref{thm:main} opens the way to analytically confirm such conclusions, which is an interesting future direction.

\subsection{Numerical simulations}\label{sec:sim}

\begin{figure}
  \centering
    \includegraphics[width=7.9cm,height=6.6cm]{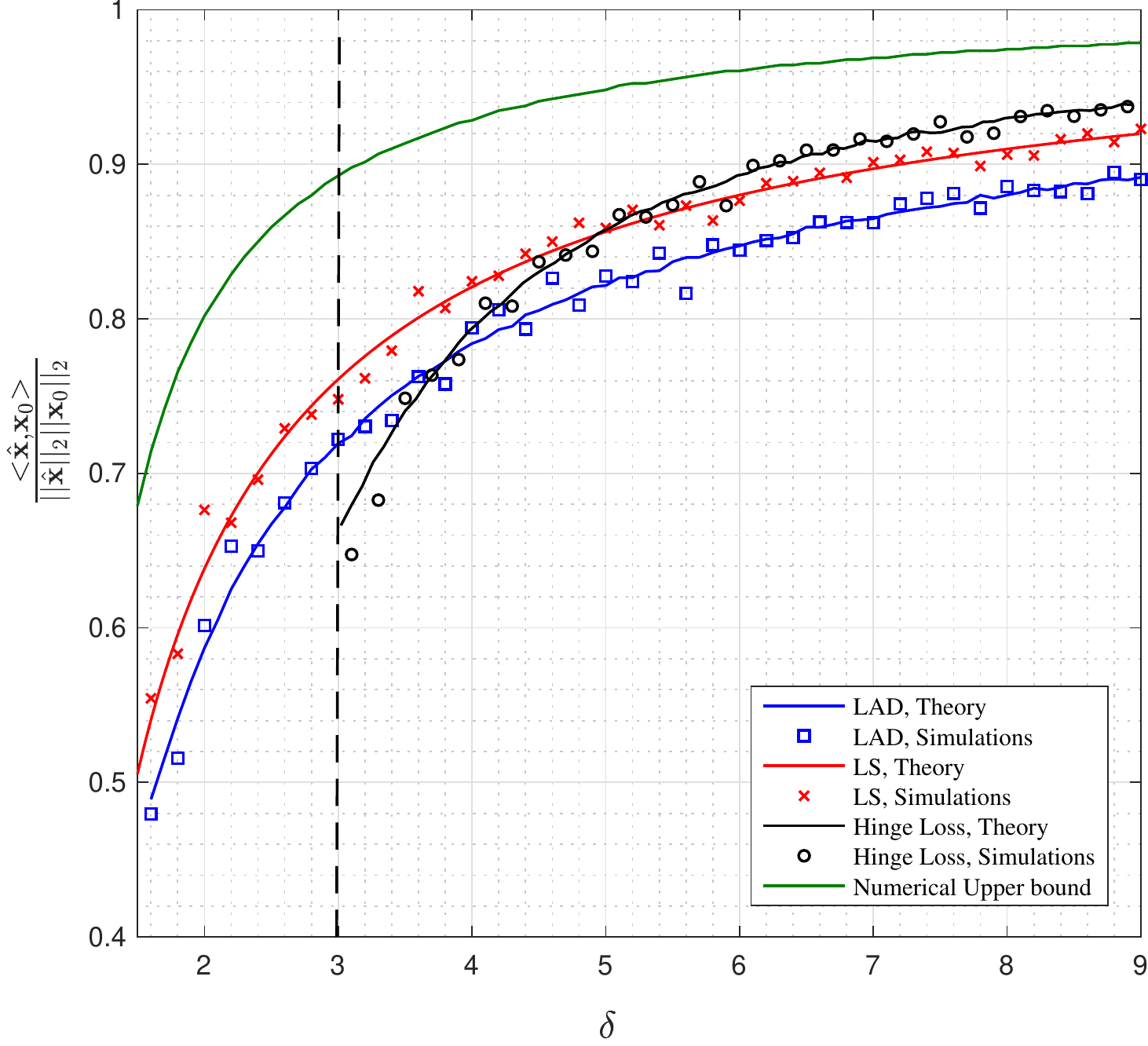}
      \caption{Comparison between theoretical and simulated results for LAD, LS and Hinge-Loss estimators along with numerical upper bound, as a function of $\delta$ for probability of error $\epsilon=0.1$. The dashed line represents the value of the threshold $\delta^*_\eps$ for $\epsilon=0.1$ (see Figure \ref{fig:thresholdfig}). For small values of $\delta$ LS outperforms the other two estimators, but the hinge-loss becomes better as $\delta$ in increases.}
            \label{fig:fig3}
            \end{figure}
            
\begin{figure}
  \centering
    \includegraphics[width=7.9cm,height=6.6cm]{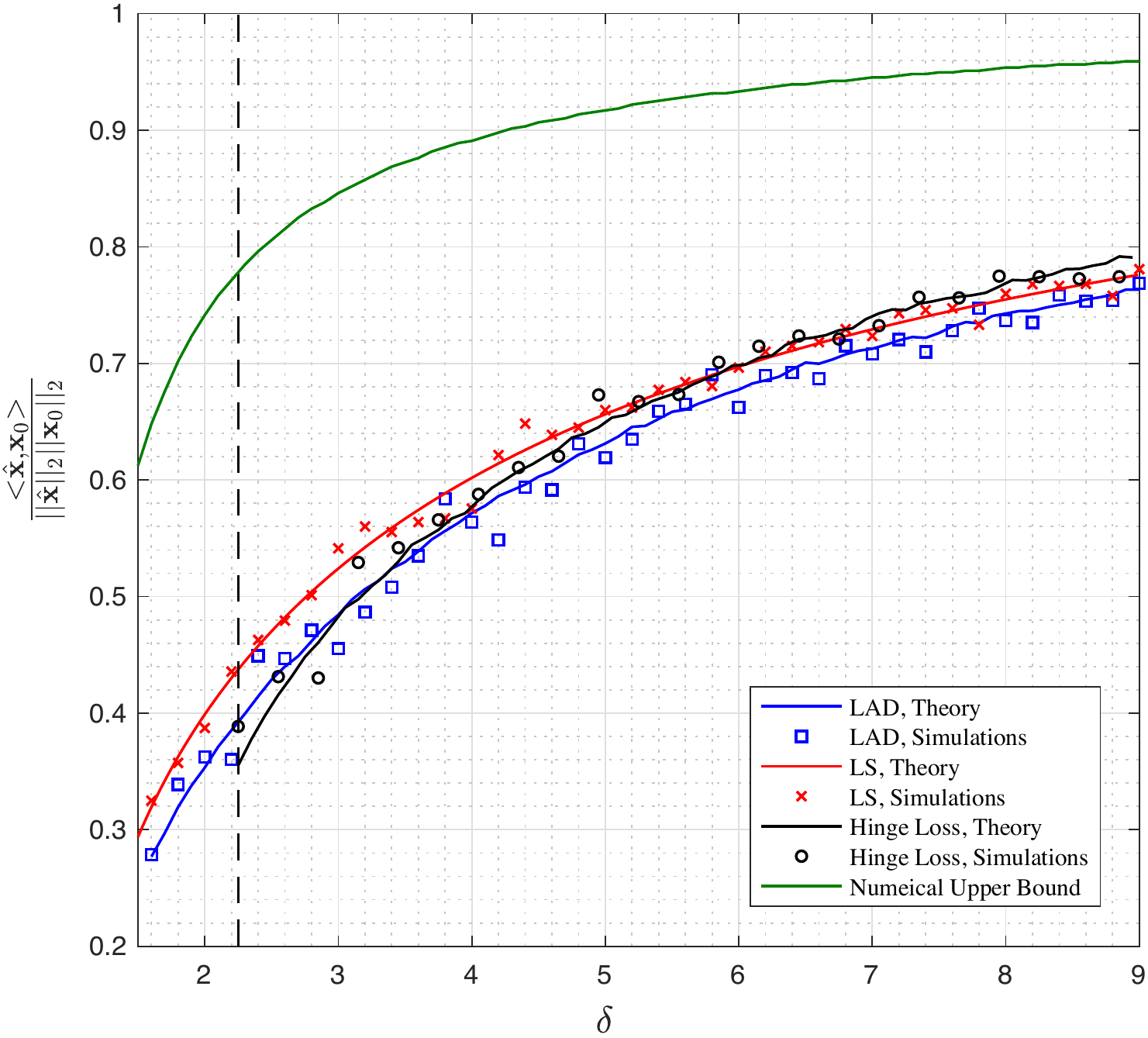}
      \caption{
      Comparison between theoretical and simulated results for LAD, LS and Hinge-Loss estimators along with numerical upper bound, as a function of $\delta$ for probability of error $\epsilon=0.25$. As in Figure \ref{fig:fig3}, the dashed line represents the value of the threshold $\delta^*_\eps$ for $\epsilon=0.25$.}
                  \label{fig:fig4}
\end{figure}

We present numerical simulations that validate the predictions of Theorems \ref{thm:main} and \ref{sec:lem} . For the numerical experiments, we generate random measurements according to \eqref{eq:gen_model} and Assumption \ref{ass:Gaussian}. Without loss of generality (due to rotational invariance of the Gaussian measure) we set $\x_0=[1,0,...,0]^T$. We then obtain estimates $\xh_\ell$ of $\x_0$ by numerically solving \eqref{eq:gen_opt} and measuring performance by the correlation value $\corr{\hat{\x}_\ell}{\x_0}$. Throughout the experiments, we set $n=128$ and the recorded values of correlation in Figures \ref{fig:fig2}--\ref{fig:fig4} are averages over $25$ independent experiments. The theoretical curves for the correlation are computed based on Theorem \ref{thm:main}. We solve the system of equations in \eqref{eq:eq_main} by the fixed-point iteration method described in Remark \ref{rem:FP}. The expectations involved in \eqref{eq:eq_main} are evaluated with Monte-Carlo estimation using $10^5$ independent samples. Numerical upper bounds in Figures \ref{fig:fig2}-\ref{fig:fig4} are derived according to Remark \ref{rem:nub}.
%
%
%
%
%

Comparisons between theoretical and simulated values for LAD and LS estimators along with the upper bound on correlation are illustrated in Figure \ref{fig:fig2} for the noiseless case. Note that for $\eps=0$, the hinge-loss has an unbounded set of minimizers for all values of $\delta$ (thus, Theorem \ref{thm:main} is not applicable). In Figure \ref{fig:fig3},  the probability of error $\epsilon$ is increased to $0.1$. Note that in this setting hinge-loss estimator exists for $\delta> \delta^*_{0.1}\approx3$ and that it outperforms LAD and LS estimators for large values of $\delta$. \\
In Figure \ref{fig:fig4} we present similar results for $\epsilon=0.25$. 
As it is evident from Figures \ref{fig:fig3} and \ref{fig:fig4}, the best estimator is varying based on the value of $\delta$ and $\eps$. 
This further emphasizes the impact of studying the accuracy of estimators while we do not restrict ourselves to a specific loss function.

\section{Conclusion}\label{sec:conc}

This paper derives \emph{sharp} asymptotic performance guarantees for a wide class of convex optimization based estimators for recovering a signal from corrupted one-bit measurements. Our general result includes as a special case the least-squares estimator that was previously studied in \cite{NIPS}. Beyond that, it applies to other popular estimators such as the LAD, Hinge-loss, logistic loss, etc. One natural and interesting research direction  is finding the optimal loss function $\ell(\cdot)$ in \eqref{eq:gen_opt}. In view of Theorem \ref{thm:main}, this boils down to finding $\ell(\cdot)$ that minimizes the ratio $\alpha/\mu$ of the parameters $\alpha$ and $\mu$ that solve the system of equations in \eqref{eq:eq_main}. For this purpose, it might also be important to derive necessary and sufficient conditions that guarantee \eqref{eq:eq_main} has a unique solution. 

\bibliographystyle{plain}
\bibliography{compbib}

\appendix
\section{Analysis}\label{sec:proof}

In this section we provide a proof sketch of Theorem \ref{thm:main}. The main technical tool that facilitates our analysis is the convex Gaussian min-max theorem (CGMT), which is an extension of Gordon's Gaussian min-max inequality (GMT). We introduce the necessary background on the CGMT in \ref{sec:CGMT}.

\subsection{Technical tool: CGMT}\label{sec:CGMT}

\subsubsection{Gordon's Min-Max Theorem (GMT)}
The Gordon's Gaussian comparison inequality \cite{Gor88} compares the min-max value of two doubly indexed Gaussian processes based on how their autocorrelation functions compare. The inequality is quite general (see \cite{Gor88}), but for our purposes we only need its application to the following two Gaussian processes:
\begin{subequations}
\begin{align}
X_{\w,\ub} &:= \ub^T \G \w + \psi(\w,\ub),\\
Y_{\w,\ub} &:= \norm{\w}_2 \g^T \ub + \norm{\ub}_2 \h^T \w + \psi(\w,\ub),
\end{align}
\end{subequations}
where: $\G\in\mathbb{R}^{m\times n}$, $\g \in \mathbb{R}^m$, $\h\in\mathbb{R}^n$, they all have entries iid Gaussian; the sets $\mathcal{S}_{\w}\subset\R^n$ and $\mathcal{S}_{\ub}\subset\R^m$ are compact; and, $\psi: \mathbb{R}^n\times \mathbb{R}^m \to \mathbb{R}$. For these two processes, define the following (random) min-max optimization programs, which (following \cite{Master}) we refer to as the \emph{primary optimization} (PO) problem and the \emph{auxiliary optimization} (AO) -- for purposes that will soon become clear. 
\begin{subequations}
\begin{align}\label{eq:PO_loc}
\Phi(\G)&=\min\limits_{\w \in \mathcal{S}_{\w}} \max\limits_{\ub\in\mathcal{S}_{\ub}} X_{\w,\ub},\\
\label{eq:AO_loc}
\phi(\g,\h)&=\min\limits_{\w \in \mathcal{S}_{\w}} \max\limits_{\ub\in\mathcal{S}_{\ub}} Y_{\w,\ub}.
\end{align}
\end{subequations}

According to Gordon's comparison inequality, for any $c\in\R$, it holds:
\begin{equation}\label{eq:gmt}
\mathbb{P}\left( \Phi(\G) < c\right) \leq 2 \mathbb{P}\left(  \phi(\g,\h) < c \right).
\end{equation}
In other words, a high-probability lower bound on the AO is a high-probability lower bound on the PO. The premise is that it is often much simpler to lower bound the AO rather than the PO. To be precise, \eqref{eq:gmt} is a slight reformulation of Gordon's original result proved in \cite{COLT} (see therein for details).
\subsubsection{Convex Gaussian Min-Max Theorem (CGMT)}
The proof of Theorem \ref{thm:main} builds on the CGMT \cite{COLT}. 
For ease of reference we summarize here the essential ideas of the framework following the presentation in \cite{Master}; please see \cite[Section~6]{Master} for the formal statement of the theorem and further details.
The CGMT is an extension of the GMT and it asserts that the AO in \eqref{eq:AO_loc} can be used to tightly infer properties of the original (PO) in \eqref{eq:PO_loc}, including the optimal cost and the optimal solution.
According to the CGMT \cite[Theorem 6.1]{Master}, if the sets $\mathcal{S}_{\w}$ and $\mathcal{S}_{\ub}$ are convex and $\psi$ is continuous \emph{convex-concave} on $\mathcal{S}_{\w}\times \mathcal{S}_{\ub}$, then, for any $\nu \in \mathbb{R}$ and $t>0$, it holds
\begin{equation}\label{eq:cgmt}
\mathbb{P}\left( \abs{\Phi(\G)-\nu} > t\right) \leq 2 \mathbb{P}\left(  \abs{\phi(\g,\h)-\nu} > t \right).
\end{equation}
In words, concentration of the optimal cost of the AO problem around $\mu$ implies concentration of the optimal cost of the corresponding PO problem around the same value $\mu$.  Moreover, starting from \eqref{eq:cgmt} and under strict convexity conditions, the CGMT shows that concentration of the optimal solution of the AO problem implies concentration of the optimal solution of the PO to the same value. For example, if minimizers of \eqref{eq:AO_loc} satisfy $\norm{\w^\ast(\g,\h)}_2 \to \zeta^\ast$ for some $\zeta^\ast>0$, then, the same holds true for the minimizers of \eqref{eq:PO_loc}: $\norm{\w^\ast(\G)}_2 \to \zeta^\ast$ \cite[Theorem 6.1(iii)]{Master}. Thus, one can analyze the AO to infer corresponding properties of the PO, the premise being of course that the former is simpler to handle than the latter.

\subsection{Applying the CGMT to \eqref{eq:opt_reg}} \label{sec:mainproof}


In this section, we show how to apply the CGMT to \eqref{eq:opt_reg}. For convenience, we drop the subscript $\ell$ from $\xh_\ell$ and simply write
\begin{equation} \label{eq:opt}
\xh = \arg\min_{\x} \frac{1}{m} \sum_{i=1}^{m} \ell(y_i \ab_i^T \x) + r\left\lVert\x\right\rVert_2^2, 
\end{equation}
where the measurements $y_i,~i\in[m]$ follow \eqref{eq:gen_model}. By rotational invariance of the Gaussian distribution of the measurement vectors $\ab_i,~i\in[m]$, we assume without loss of generality that $\x_0 = [1,0,...,0]^T$. Denoting $y_i\ab_i^T\x$ by $u_i$, \eqref{eq:opt} is equivalent to the following min-max optimization: 
\begin{dmath}\label{eq:mmbn}
\min_{\ub,\x} \max_{\pmb{\beta}} \frac{1}{m} \sum_{i=1}^{m}\ell(u_i) + \frac{1}{m}\sum_{i=1}^{m}\beta_i u_i \\
- \frac{1}{m}\sum_{i=1}^{m}\beta_i y_i \ab_i^T \x + r\left\lVert\x\right\rVert_2^2.
\end{dmath}
Now, let us define 
$$\ab_i=[s_i;\tilde{\ab}_i],~i\in[m]\quad\text{ and }\quad \x=[x_1;\tilde{\x}],$$ such that $s_i$ and $x_1$ are the first entries of $\ab_i$ and $\x$, respectively. Note that in this new notation \eqref{eq:gen_label} becomes:
\bea\label{eq:y_pf}
y_i = \begin{cases} 1 &, \text{w.p.}~~ f(s), \\ -1 &, \text{w.p.}~~1-f(s),  \end{cases}
\eea
and 
\bea\label{eq:corr_pf}
\corr{\x}{\x_0} = \frac{\xh_1}{\sqrt{\xh_1^2+\|\widetilde{\xh}\|_2^2}},
\eea
where we denote $\xh=[\xh_1;{\widetilde{\xh}}]$.
Also, \eqref{eq:mmbn} is written as 
\begin{dmath*}
\min_{\ub,\x}\max_{\pmb{\beta}}  \frac{1}{m}\sum_{i=1}^{m} \ell(u_i) + \frac{1}{m}\sum_{i=1}^{m} \beta_i u_i+ \frac{1}{m}\sum_{i=1}^{m} \beta_i y_i \tilde{\ab}_i^T \tilde{\x} - \frac{1}{m}\sum_{i=1}^{m} \beta_i y_i s_i x_1+r x_1^2 + r\left\lVert\tilde{\x}\right\rVert_2^2   
\end{dmath*}
or, in matrix form:
\begin{dmath}\label{eq:normopt_PO}
\min_{\ub,\x}\max_{\pmb{\beta}}~ \frac{1}{m}\betab^T\mathbf{D}_y\tilde{\A}\tilde{\x} + \frac{1}{m}x_1\pmb{\beta}^T\mathbf{D}_y\s+ \frac{1}{m}\pmb{\beta}^T\ub + r x_1^2 + r\left\lVert\tilde{\x}\right\rVert_2^2+\frac{1}{m}\sum_{i=1}^m \ell (u_i).
\end{dmath}
where $\mathbf{D}_\y := {\rm{diag}}(y_1,y_2,...,y_m)$ is a diagonal matrix with $y_1,y_2,...y_m$ on the diagonal, $\s=[s_1,\ldots,s_m]^T$ and $\tilde{\A}$ is an $m\times (n-1)$ matrix with rows $\tilde{\ab}_i^T,~i\in[m]$.

In \eqref{eq:normopt_PO} we recognize that the first term has the bilinear form required by the GMT in \eqref{eq:PO_loc}. The rest of the terms form the function $\psi$ in \eqref{eq:PO_loc}: they are independent of $\tilde\A$ and convex-concave as desired by the CGMT. Therefore, we have expressed \eqref{eq:opt} in the desired form of a PO and for the rest of the proof we will analyze the probabilistically equivalent  AO problem. In view of \eqref{eq:AO_loc}, this is given as follows,
\begin{dmath}\label{eq:normopt}
\min_{\ub,\x}\max_{\pmb{\beta}} \frac{1}{m} \left\lVert\tilde{\x}\right\rVert_2 \mathbf{g}^T \mathbf{D}_y \pmb{\beta} + \frac{1}{m} \left\lVert \mathbf{D}_y\pmb{\beta}\right\rVert_2\mathbf{h}^T\tilde{\x} - \frac{1}{m}x_1\pmb{\beta}^T\mathbf{D}_y\s+ \frac{1}{m}\pmb{\beta}^T\ub +r x_1^2 + r\left\lVert\tilde{\x}\right\rVert_2^2+ \frac{1}{m}\sum_{i=1}^m \ell (u_i) ,
\end{dmath}
where as in \eqref{eq:AO_loc} $\mathbf{g}\sim\mathcal{N}(0,I_m)$ and $\mathbf{h}\sim\mathcal{N}(0,I_{n-1})$.

\subsection{Analysis of the Auxiliary Optimization}
Here, we show how to analyze the AO in \eqref{eq:normopt} \footnote{There are several technical details in the proof that we omit from this proof sketch. This includes: boundedness of the constraint sets in \eqref{eq:normopt}; changing the order of min-max when optimizing over the direction of $\tilde{\x}$ in \eqref{eq:alpha_pf}; and, uniform convergence in going from \eqref{eq:binfty} to \eqref{eq:det}.}. The steps are similar and follow the recipe prescribed in \cite{COLT,Master}. To begin with, note that $y_i\in{\pm1}$, therefore $\mathbf{D}_\y\mathbf{g} \sim\mathcal{N}(0,I_m)$ and $\left\lVert\mathbf{D}_\y\pmb{\beta}\right\rVert_2 = \left\lVert\pmb{\beta}\right\rVert_2$. Let us denote 
$$\alpha:=\left\lVert\tilde{\x}\right\rVert_2\quad\text{ and }\quad\mu:=x_1.$$ We can simplify \eqref{eq:normopt} by optimizing over the direction of $\tilde{\x}$. This leads to the following:
\begin{dmath}\label{eq:alpha_pf}
\min_{\alpha\ge0,\mu,\ub}~\max_{\pmb{\beta}} ~ \frac{1}{m}\alpha\mathbf{g}^T\pmb{\beta} - \frac{\alpha}{m}\left\lVert\pmb{\beta}\right\rVert_2\left\lVert\mathbf{h}\right\rVert_2 - \frac{1}{m}\mu \mathbf{s}^T\mathbf{D}_{\y} \pmb{\beta} + \frac{1}{m}\pmb{\beta}^T \mathbf{u} +r\mu^2 + r\alpha^2 + \frac{1}{m} \sum_{i=1}^m\ell(u_i). 
\end{dmath}
Next, let $\gamma := \frac{\left\lVert\pmb{\beta}\right\rVert_2}{\sqrt{m}}$ and optimize over the direction of $\betab$ to yield
\begin{dmath}\label{eq:gamma}
\min_{\alpha\ge0,\ub,\mu}~\max_{\gamma\ge0}~\frac{\gamma}{\sqrt{m}}\left\lVert \alpha\mathbf{g}-\mu\mathbf{D}_\y\mathbf{s}+\ub\right\rVert_2 - \frac{\alpha}{\sqrt{m}}\gamma\left\lVert\mathbf{h}\right\rVert_2 +r\mu^2 + r\alpha^2 + \frac{1}{m}\sum_{i=1}^m\ell(u_i).
\end{dmath}
To continue, we utilize the fact that for all $x\in\R$, 
$\min_{\tau>0}\frac{\tau}{2} + \frac{x^2}{2\tau m} = \frac{x}{\sqrt{m}}$.
With this trick, the optimization over $\ub$ becomes separable over its coordinates and \eqref{eq:gamma} can be rewritten as
\begin{dmath*}
\min_{\alpha\ge0,\tau>0,\ub,\mu}\max_{\gamma\ge0}\frac{\gamma\tau}{2} + \frac{\gamma}{2\tau m}{\left\lVert  \alpha\mathbf{g}-\mu\mathbf{D}_\y\mathbf{s}+\ub \right\rVert}_2 ^2+r\mu^2 + r\alpha^2 + \frac{1}{m}\sum_{i=1}^m\ell(u_i) - \frac{\alpha}{\sqrt{m}}\gamma\left\lVert\mathbf{h}\right\rVert_2.
\end{dmath*}
Equivalently, we express this in the following convenient form:
\begin{dmath} \label{eq:binfty}
\min_{\mu,\alpha\ge0,\tau>0}~\max_{\gamma\ge0}~\frac{\gamma \tau}{2}- \frac{\alpha}{\sqrt{m}}\gamma\left\lVert\mathbf{h}\right\rVert_2 +r\mu^2 + r\alpha^2 + \frac{1}{m}\sum_{i=1}^m \env{\ell}{\alpha G+\mu YS}{\frac{\tau}{\gamma}},
\end{dmath}
where recall the definition of the Moreau envelope:
$$
\env{\ell}{\alpha g_i+\mu y_is_i}{\frac{\tau}{\gamma}} =  \min_{u_i}\frac{\gamma}{2\tau}(\alpha g_i + \mu y_i s_i-u_i)^2 + \ell(u_i).
$$
As to now, we have reduced the AO into a random min-max optimization over only four scalar variables in \eqref{eq:binfty}. For fixed $\mu,\alpha,\tau,\gamma$, direct application of the weak law of large numbers, shows that the objective function of \eqref{eq:binfty} converges in probability to the following as  $m,n\rightarrow\infty$ and $\frac{m}{n}=\delta$:
$$
\gamma\frac{\tau}{2}-\frac{\alpha\gamma}{\sqrt{\delta}}+r\mu^2 + r\alpha^2 + \mathbb{E}\left[\env{\ell}{\alpha G+\mu YS}{\frac{\tau}{\gamma}} \right],
$$
where $G,S\sim\mathcal{N}(0,1)$ and $Y\sim 2\mathrm{Bernoulli}(f(S))-1$ (in view of \eqref{eq:y_pf}). 
Based on that, it can be shown (see \cite{Master,PhaseLamp} for similar arguments; details are deferred to the long version of the paper) that the random optimizers $\alpha_n$ and $\mu_n$ of \eqref{eq:binfty} converge to the deterministic optimizers $\alpha$ and $\mu$ of the following (deterministic) optimization problem (whenever these are bounded as the statement of the theorem requires):
\begin{dmath}\label{eq:det}
\min_{\alpha\ge0,\mu,\tau>0}\max_{\gamma\ge0} \gamma\frac{\tau}{2}-\frac{\alpha\gamma}{\sqrt{\delta}}+r\mu^2 + r\alpha^2 + \mathbb{E}\left[\env{\ell}{\alpha G+\mu YS}{\frac{\tau}{\gamma}} \right].
\end{dmath}
At this point, recall that $\alpha$ represents the norm of $\tilde\x$ and $\mu$ the value of $x_1$. Thus, in view of (i) \eqref{eq:corr_pf}, (ii) the equivalence between the PO and the AO, and, (iii) our derivations thus far we have that 
\bea
\lim_{n\rightarrow+\infty}\corr{\xh}{\x_0} = \frac{\mu}{\sqrt{\mu^2+\alpha^2}},\nn
\eea
where $\mu$ and $\alpha$ are the minimizers in \eqref{eq:det}. The three equations in \eqref{eq:eq_main} are derived by the first-order optimality conditions of the optimization in \eqref{eq:det}. We show this next.

\subsection{First-order optimality conditions}
By direct differentiation, the first order optimality conditions of of the min-max optimization in \eqref{eq:det} are as follows:
\begin {equation}\label{eq:1}
2r\mu+\mathbb{E}\left[YS\envdx{\ell}{\ourx}{\frac{\tau}{\gamma}}\right] = 0,
\end{equation}
\begin{equation}\label{eq:2}
2 r \alpha +\mathbb{E}\left[G \envdx{\ell}{\ourx}{\frac{\tau}{\gamma}} \right]=\frac{\gamma}{\sqrt{\delta}},
\end{equation}
\begin{equation}\label{eq:3}
\frac{\gamma}{2} + \frac{1}{\gamma}\mathbb{E}\left[\envdla{\ell}{\ourx}{\frac{\tau}{\gamma}}\right]=0,
\end{equation}
\begin{equation}\label{eq:4}
-\frac{\alpha}{\sqrt{\delta}}-\frac{\tau}{\gamma^2}\mathbb{E}\left[\envdla{\ell}{\ourx}{\frac{\tau}{\gamma}}\right]+\frac{\tau}{2} = 0.
\end{equation}
Next, we show how these equations simplify to the following system of equation: 
\begin{subequations}\label{eq:reg_main}
\bea
 \Exp\left[Y\, S \cdot\envdx{\ell}{\ourx}{\la}  \right]&=-2r\mu , \label{eq:mureg_main}\\
 {\la^2}\,{\delta}\,\Exp\left[\,\left(\envdx{\ell}{\ourx}{\la}\right)^2\,\right]&=\alpha^2 ,
\label{eq:alphareg_main}\\
\lambda\,\delta\,\E\left[ G\cdot \envdx{\ell}{\ourx}{\la}  \right]&=\alpha(1-2r\la\delta) .
\label{eq:lambdareg_main}
\eea
\end{subequations}
Denote $\lambda := \frac{\tau}{\gamma}$. 
First, \eqref{eq:mureg_main} is followed by equation \eqref{eq:1}.
Second, substituting $\gamma$ from \eqref{eq:3} in \eqref{eq:4} yields $\tau=\frac{\alpha}{\sqrt{\delta}}$ or $\gamma=\frac{\alpha}{\lambda \sqrt{\delta}}$, which together with \eqref{eq:2} leads to \eqref{eq:lambdareg_main}. Finally, \eqref{eq:alphareg_main} can be obtained by substituting $\gamma = \frac{\alpha}{\lambda\sqrt{\delta}}$ in \eqref{eq:3} and using the relation:
\begin{equation*}
\envdla{\ell}{\ourx}{\lambda} = -\frac{1}{2}(\envdx{\ell}{\ourx}{\lambda})^2.
\end{equation*}
As already mentioned in Section \ref{sec:form}, we focus on the non-regularized loss functions in this paper. Setting $r=0$ in \eqref{eq:reg_main} will give the desired system of equations in \eqref{eq:eq_main}

\end{document}